\numberwithin{equation}{section}
\numberwithin{figure}{section}
\theoremstyle{plain}
\newtheorem{thm}{\protect\theoremname}[section]
\newtheorem{prop}[thm]{\protect\propositionname}
\theoremstyle{plain}
\theoremstyle{definition}
\theoremstyle{plain}
\newtheorem{lem}[thm]{\protect\lemmaname}
\theoremstyle{plain}
\newtheorem{rem}[thm]{\protect\remarkname}
\theoremstyle{plain}
\providecommand{\definitionname}{Definition}
\providecommand{\lemmaname}{Lemma}
\providecommand{\theoremname}{Theorem}
\providecommand{\corollaryname}{Corollary}
\providecommand{\remarkname}{Remark}
\providecommand{\propositionname}{Proposition}
\DeclareMathOperator{\loc}{loc}
\DeclareMathOperator{\dist}{dist}
\DeclareMathOperator{\ess}{ess}
\DeclareMathOperator{\diam}{diam}
\DeclareMathOperator{\cp}{cap}
\begin{document}

\title[Capacity of rings and mappings generate embeddings of Sobolev spaces]{Capacity of rings and mappings generate embeddings of Sobolev spaces}

\author{Alexander Menovschikov and Alexander Ukhlov}
\begin{abstract}
In this paper we give characterizations of mappings generate embeddings of Sobolev spaces in the terms of ring capacity inequalities. In addition we prove that such mappings are Lipschitz mappings in the sub-hyperbolic type capacitory metrics.
\end{abstract}
\maketitle
\footnotetext{\textbf{Key words and phrases:} Sobolev spaces, Quasiconformal mappings} 
\footnotetext{\textbf{2000
Mathematics Subject Classification:} 46E35, 30C65.}
\footnotetext{The first named author was supported by the grant GA\v{C}R 20-19018Y.}

\section{Introduction }

In the Sobolev embedding theory (see, for example, \cite{M}) the significant part have mappings which generate embeddings of Sobolev spaces by means of composition operators \cite{GGu,GU,GU16,GU17}. The geometric theory of composition operators on Sobolev spaces \cite{U93,VU04} represents a generalization of the quasiconformal mappings theory and is closely connected with various generalizations of quasiconformal mappings defined in capacity (moduli) terms, see, for example, \cite{HK14,K86,Kr64,MRSY09,P69,R96,S85}. The mappings generate bounded composition operators on Sobolev spaces are called as weak $(p,q)$-quasiconformal mappings \cite{GGR95,VU98,VU04}. 
The weak $(p,q)$-quasiconformal mappings can be defined in the terms of capacity inequalities \cite{U93,VU98} by using extremal capacity functions \cite{VGR}. In the present work we consider characterizations of the weak $(p,q)$-quasiconformal mappings, $n-1<q\leq p<\infty$, in the terms of ring capacity inequalities. Note that characterizations of (weak) quasiconformal mappings in the terms of ring capacity inequalities arise to the work \cite{Ger62} and have a significant role in the geometric function theory (see, for example, \cite{MRSY09}).

Recall that a homeomorphism $\varphi:\Omega\to\widetilde{\Omega}$ is called a weak $p$-quasiconformal mapping \cite{GGR95} if $\varphi\in W^1_{p,\loc}(\Omega)$, has finite distortion and
$$
K_{p}(\varphi;\Omega)=\ess\sup\limits_{x\in\Omega}\left(\frac{|D\varphi(x)|^p}{|J(x,\varphi)|}\right)^{\frac{1}{p}}<\infty.
$$
In the case $p=q=n$ we obtain the class of quasiconformal mappings. The first main result of the article states: {\it Let $n-1<p<\infty$, then $\varphi :\Omega\to \widetilde{\Omega}$ is a weak $p$-quasiconformal mapping if and only if there exists a constant $C_{p}(\varphi;\Omega)<\infty$ such that for every ring condenser $(F,G)\subset\widetilde{\Omega}$ the inequality
$$
\cp_{p}^{1/p}(\varphi^{-1}(F);\varphi^{-1}(G))
\leq C_{p}(\varphi;\Omega)\cp_{p}^{1/p}(F;G)
$$
holds. }

Weak $(p,q)$-quasiconformal mappings are defined as mappings with integrable $p$-distortion. Recall that a homeomorphism $\varphi:\Omega\to\widetilde{\Omega}$ is called a weak $(p,q)$-quasiconformal mapping \cite{VU98} if $\varphi\in W^1_{q,\loc}(\Omega)$, has finite distortion and
$$
K_{p,q}(\varphi;\Omega)=\left(\int\limits_{\Omega}\left(\frac{|D\varphi(x)|^p}{|J(x,\varphi)|}\right)^{\frac{q}{p-q}}\right)^{\frac{p-q}{pq}}<\infty.
$$

The second main result of the article gives the characterization of a general class of weak $(p,q)$-quasiconformal mappings, $n-1<q<p<\infty$, by means of capacity inequalities for ring condensers: {\it Let $n-1	<q<p<\infty$, then a homeomorphism $\varphi :\Omega\to \widetilde{\Omega}$ is a weak $(p,q)$-quasiconformal mapping if and only if
there exists a bounded monotone countable-additive set function
$\widetilde{\Phi}_{p,q}$ defined on open subsets of $\widetilde{\Omega}$
such that for every ring condenser 
$(F,G)\subset \widetilde{\Omega}$
the inequality
\begin{equation*}
\cp_q^{1/q}(\varphi^{-1}(F);\varphi^{-1}(G)) \leq \widetilde\Phi_{p,q}(G)^{\frac{p-q}{pq}} \cp_p^{1/p}(F;G)
\end{equation*}
holds.
} 

Set functions in the composition operators theory on Sobolev spaces were introduced in \cite{U93} in the framework of the solution of Reshetnyak's problem (1969). In the second part of this article we study capacitary characterizations of the set functions associated with norms of composition operators \cite{VU04,VU05}.

In the end of the introduction, let us recall that quasiconformal mappings are bi-Lipschitz mappings in metrics associated with conformal capacity \cite{FMV91,VGR}. In the present article we define the sub-hyperbolic type $p$-capacitary metrics 
$$
d_p(x,y) = \inf\limits_{\gamma} \cp^{\frac{1}{p}}_p(\gamma; \Omega),\,\, n-1<p\leq n,
$$
where the infimum is taken over all continuous curves $\gamma$, joining points $x$ and $y$ in $\Omega$. 
In the case $p=n$ metrics of such type were considered in \cite{FMV91} in a connection with quasiconformal mappings.

We prove that if $\varphi: \Omega \to \widetilde\Omega$ is a weak $(p,q)$-quasiconformal mapping, $n-1< q \leq p \leq n$, then, for any two points $x,y \in \widetilde\Omega$ the following inequality 
$$
d_q(\varphi^{-1}(x), \varphi^{-1}(y)) \leq K_{p,q}(\varphi;\Omega) d_p(x,y),
$$
holds. In the case $p=q=n$ we have the well known property of quasiconformal mappings \cite{FMV91,GP76}.

\section{Composition operators on Sobolev spaces}

\subsection{Sobolev spaces}

Let us recall the basic notions of the Sobolev spaces.
Let $\Omega$ be an open subset of $\mathbb R^n$. The Sobolev space $W^1_p(\Omega)$, $1\leq p\leq\infty$, is defined \cite{M}
as a Banach space of locally integrable weakly differentiable functions
$f:\Omega\to\mathbb{R}$ equipped with the following norm: 
\[
\|f\mid W^1_p(\Omega)\|=\| f\mid L_p(\Omega)\|+\|\nabla f\mid L_p(\Omega)\|,
\]
where $\nabla f$ is the weak gradient of the function $f$, i.~e. $ \nabla f = (\frac{\partial f}{\partial x_1},...,\frac{\partial f}{\partial x_n})$.

The homogeneous seminormed Sobolev space $L^1_p(\Omega)$, $1\leq p\leq\infty$, is defined as a space
of locally integrable weakly differentiable functions $f:\Omega\to\mathbb{R}$ equipped
with the following seminorm: 
\[
\|f\mid L^1_p(\Omega)\|=\|\nabla f\mid L_p(\Omega)\|.
\]

In the Sobolev spaces theory, a crucial role is played by capacity as an outer measure associated with Sobolev spaces \cite{M}. In accordance to this approach, elements of Sobolev spaces $W^1_p(\Omega)$ are equivalence classes up to a set of $p$-capacity zero \cite{MH72}. 

The mapping $\varphi:\Omega\to\mathbb{R}^{n}$ belongs to the Sobolev space $W^1_{p,\loc}(\Omega,\mathbb R^n)$, if its coordinate functions belong to $W^1_{p,\loc}(\Omega)$. In this case, the formal Jacobi matrix $D\varphi(x)$ and its determinant (Jacobian) $J(x,\varphi)$
are well defined at almost all points $x\in\Omega$. The norm $|D\varphi(x)|$ is the operator norm of $D\varphi(x)$.

\subsection{Composition operators}

Let $\Omega$ and $\widetilde{\Omega}$ be domains in the Euclidean space $\mathbb R^n$. Then a homeomorphism $\varphi:\Omega\to\widetilde{\Omega}$ generates a bounded composition
operator 
\[
\varphi^{\ast}:L^1_p(\widetilde{\Omega})\to L^1_q(\Omega),\,\,\,1\leq q\leq p\leq\infty,
\]
by the composition rule $\varphi^{\ast}(f)=f\circ\varphi$, if for
any function $f\in L^1_p(\widetilde{\Omega})$, the composition $\varphi^{\ast}(f)\in L^1_q(\Omega)$
is defined quasi-everywhere in $\Omega$ and there exists a constant $K_{p,q}(\varphi;\Omega)<\infty$ such that 
\[
\|\varphi^{\ast}(f)\mid L^1_q(\Omega)\|\leq K_{p,q}(\varphi;\Omega)\|f\mid L^1_p(\widetilde{\Omega})\|.
\]

Recall that the $p$-dilatation \cite{Ger69} of a Sobolev mapping $\varphi: \Omega\to \widetilde{\Omega}$ at the point $x\in\Omega$ is defined as
$$
K_p(x)=\inf \{k(x): |D\varphi(x)|\leq k(x) |J(x,\varphi)|^{\frac{1}{p}}\}.
$$

The following theorem gives the characterization of composition operators in terms of integral characteristics of mappings of finite distortion. Homeomorphisms $\varphi:\Omega\to\widetilde{\Omega}$, which satisfy conditions of this theorem, are weak $(p,q)$-quasiconformal mappings.

Recall that a weakly differentiable mapping $\varphi:\Omega\to\mathbb{R}^{n}$ is a mapping of finite distortion if $D\varphi(x)=0$ for almost all $x$ from $Z=\{x\in\Omega: J(x,\varphi)=0\}$ \cite{VGR}. 

\begin{thm}
\label{CompTh} Let $\varphi:\Omega\to\widetilde{\Omega}$ be a homeomorphism
between two domains $\Omega$ and $\widetilde{\Omega}$. Then $\varphi$ generates a bounded composition
operator 
\[
\varphi^{\ast}:L^1_p(\widetilde{\Omega})\to L^1_{q}(\Omega),\,\,\,1\leq q\leq p\leq\infty,
\]
 if and only if $\varphi\in W^1_{1,\loc}(\Omega)$, has finite distortion if $p<\infty$,
and 
\[
K_{p,q}(\varphi;\Omega) := \|K_p \mid L_{\kappa}(\Omega)\|<\infty, \,\,1/q-1/p=1/{\kappa}\,\,(\kappa=\infty, \text{ if } p=q).
\]
The norm of the operator $\varphi^\ast$ is estimated as $\|\varphi^\ast\| \leq K_{p,q}(\varphi;\Omega)$.
\end{thm}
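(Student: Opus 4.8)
The plan is to prove the two implications separately: sufficiency by a direct estimate built on the chain rule, the $p$-dilatation and Hölder's inequality, and necessity by a set-function argument of the type introduced in \cite{U93}.

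For the sufficiency, assume $\varphi\in W^1_{1,\loc}(\Omega)$ has finite distortion and $\|K_p\mid L_{\kappa}(\Omega)\|<\infty$. I would first prove the desired inequality for smooth functions $f$, where the chain rule $\nabla(f\circ\varphi)(x)=D\varphi(x)^{\mathrm{T}}(\nabla f)(\varphi(x))$ holds for almost every $x\in\Omega$; the definition of the $p$-dilatation then gives the pointwise bound
\[
|\nabla(f\circ\varphi)(x)|\leq |D\varphi(x)|\,|(\nabla f)(\varphi(x))|\leq K_p(x)\,|J(x,\varphi)|^{\frac{1}{p}}\,|(\nabla f)(\varphi(x))|.
\]
Raising to the power $q$, integrating over $\Omega$ and applying Hölder's inequality with the conjugate exponents $\kappa/q$ and $p/q$ (conjugate precisely because $1/q-1/p=1/\kappa$) yields
\[
\|\nabla(f\circ\varphi)\mid L_q(\Omega)\|^q\leq \|K_p\mid L_{\kappa}(\Omega)\|^q\left(\int_{\Omega}|J(x,\varphi)|\,|(\nabla f)(\varphi(x))|^p\,dx\right)^{\frac{q}{p}}.
\]
The change-of-variables inequality for homeomorphisms of finite distortion bounds the remaining integral by $\int_{\widetilde{\Omega}}|\nabla f(y)|^p\,dy=\|f\mid L^1_p(\widetilde{\Omega})\|^p$. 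This establishes the estimate for smooth $f$; extending by density (the bound itself showing that the limit is independent of the approximating sequence and defining $\varphi^{\ast}(f)$ quasi-everywhere) gives the operator bound for all $f\in L^1_p(\widetilde{\Omega})$ and the norm estimate $\|\varphi^{\ast}\|\leq K_{p,q}(\varphi;\Omega)$. The endpoint $p=q$ is the same argument with the Hölder step replaced by extracting $\ess\sup_{x}K_p(x)$.

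For the necessity, suppose $\varphi^{\ast}$ is bounded with norm $K$. Testing the operator on coordinate functions $y_j$ truncated to relatively compact balls shows $\varphi_j=f\circ\varphi\in L^1_q$ locally, hence $\varphi\in W^1_{q,\loc}(\Omega)\subset W^1_{1,\loc}(\Omega)$, so that $D\varphi(x)$ and $J(x,\varphi)$ are defined almost everywhere. To recover the distortion I would introduce the set function
\[
\Phi(A)=\sup\left\{\left(\frac{\|\varphi^{\ast}(f)\mid L^1_q(\Omega)\|}{\|f\mid L^1_p(\widetilde{\Omega})\|}\right)^{\kappa}: f\in L^1_p(\widetilde{\Omega}),\ \operatorname{supp}f\subset A\right\}
\]
on open sets $A\subset\widetilde{\Omega}$, with $\kappa=pq/(p-q)$; by construction $\Phi$ is monotone and $\Phi(\widetilde{\Omega})\leq K^{\kappa}$. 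The crucial step is to show that $\Phi$ extends to a countably additive Borel measure, since disjointness of supports makes the contributions additive in the exponent $\kappa$. Radon--Nikodym differentiation of $\Phi$ combined with the change-of-variables formula then identifies the absolutely continuous density with $K_p^{\kappa}$ transported under $\varphi$, giving the integral bound $\int_{\Omega}K_p(x)^{\kappa}\,dx\leq\Phi(\widetilde{\Omega})\leq K^{\kappa}$, that is $\|K_p\mid L_{\kappa}(\Omega)\|\leq K$. Finiteness of this integral forces $K_p(x)<\infty$ almost everywhere, whence $D\varphi(x)=0$ for almost every $x\in Z=\{J(x,\varphi)=0\}$ when $p<\infty$, i.e. $\varphi$ is a mapping of finite distortion.

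The main obstacle is the necessity, and within it the two interlocking points: the countable additivity of $\Phi$ and the passage from the measure-theoretic density to the pointwise distortion $K_p(x)$. The additivity is delicate because the defining supremum is nonlinear, so one must glue near-extremal test functions with disjoint supports and control the cross terms; the pointwise identification then rests on a Lebesgue-differentiation argument at points where $\varphi$ is approximately differentiable, combining the regularity obtained in the first step with the change-of-variables formula. The sufficiency, by contrast, is essentially a formal computation once the chain rule and the change-of-variables inequality are in place.
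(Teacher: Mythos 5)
The paper does not actually prove Theorem~\ref{CompTh}: it is quoted from \cite{VG75,U93,V88,GU10}, and your outline reproduces the standard argument of \cite{U93,VU04} on which those references rest --- chain rule, the pointwise dilatation bound $|D\varphi(x)|\leq K_p(x)|J(x,\varphi)|^{1/p}$, H\"older with the conjugate exponents $\kappa/q$ and $p/q$, and the weak change-of-variables formula of \cite{H93} for sufficiency; the monotone countably additive set function (the same $\widetilde{\Phi}_{p,q}$ the paper recalls in \eqref{setfunc} and Lemma~\ref{lem:AddFun}) together with its Lebesgue differentiation for necessity. The only point you leave untouched is the endpoint $p=\infty$ (where $\kappa=q$, the finite-distortion hypothesis is dropped, and the separate treatment of \cite{GU10} is needed); otherwise the sketch is sound and correctly isolates the genuinely delicate steps, namely the countable additivity of the set function and the passage from its density to the pointwise dilatation.
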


This theorem in the case $p=q=n$ was given in the work \cite{VG75}. The general case $1\leq q\leq p<\infty$ was proved in \cite{U93}, where the weak change of variables formula \cite{H93} was used (see, also the case $n<q=p<\infty$ in \cite{V88}). The limit case $p=\infty$ was considered in \cite{GU10}. The geometric characteristics of mappings which generate bounded composition operators in the case $n-1<q=p<\infty$ were given in \cite{GGR95}. 

It is known (see, for example, \cite{V71}) that mappings which are inverse to quasiconformal homeomorphisms are quasiconformal also. In the case of weak $(p,q)$-quasiconformal mappings with $n-1<q\leq p<\infty$, the following composition regularity theorem was given in \cite{U93}.

\begin{thm}
\label{CompThD} Let a homeomorphism $\varphi:\Omega\to\widetilde{\Omega}$
between two domains $\Omega$ and $\widetilde{\Omega}$ generate a bounded composition
operator 
\[
\varphi^{\ast}:L^1_p(\widetilde{\Omega})\to L^1_{q}(\Omega),\,\,\,n-1<q \leq p< \infty.
\]
Then the inverse mapping $\varphi^{-1}:\widetilde{\Omega}\to\Omega$ generates a bounded composition operator 
\[
\left(\varphi^{-1}\right)^{\ast}:L^1_{q'}(\Omega)\to L^1_{p'}(\widetilde{\Omega}),
\]
where $p'=p/(p-n+1)$, $q'=q/(q-n+1)$. 
\end{thm}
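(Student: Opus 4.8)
The plan is to derive the statement from the analytic characterization of composition operators in Theorem~\ref{CompTh}. By that theorem the hypothesis says precisely that $\varphi$ is a weak $(p,q)$-quasiconformal mapping, i.e. $\varphi\in W^1_{1,\loc}(\Omega)$, has finite distortion, and $\|K_p\mid L_{\kappa}(\Omega)\|<\infty$ with $1/\kappa=1/q-1/p$. Since $t\mapsto t/(t-n+1)$ is decreasing on $(n-1,\infty)$, the inequality $q\leq p$ gives $1\leq p'\leq q'<\infty$, so Theorem~\ref{CompTh} applies to $\psi:=\varphi^{-1}$ with source exponent $q'$ and target exponent $p'$. Thus it suffices to verify that $\psi\in W^1_{1,\loc}(\widetilde\Omega)$, that $\psi$ has finite distortion, and that $\|K_{q'}(\cdot,\psi)\mid L_{\kappa'}(\widetilde\Omega)\|<\infty$, where $1/\kappa'=1/p'-1/q'$. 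A direct computation gives $1/\kappa'=(n-1)(1/q-1/p)=(n-1)/\kappa$, that is $\kappa'=\kappa/(n-1)$.

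The crux of the argument, and the only place where the restriction $q>n-1$ is used, is the Sobolev regularity of the inverse. Denoting by $\sigma_1\geq\dots\geq\sigma_n$ the singular values of $D\varphi$, one has $|\adj D\varphi|=\sigma_1\cdots\sigma_{n-1}\leq|D\varphi|^{n-1}$, so the cofactor matrix satisfies $|\adj D\varphi|\in L_{q/(n-1),\loc}(\Omega)$ with $q/(n-1)>1$. From this local integrability of the cofactors I would invoke the regularity theory of homeomorphisms of finite distortion to conclude that $\psi\in W^1_{1,\loc}(\widetilde\Omega)$, that $\psi$ is a mapping of finite distortion, and that $D\psi(\varphi(x))=(D\varphi(x))^{-1}$ for almost all $x$ with $J(x,\varphi)\neq0$. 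I expect this to be the hardest step: when $q\leq n-1$ the cofactors need not be locally integrable and the inverse may fail to be weakly differentiable.

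Granting this, the conclusion follows from a pointwise identity and the area formula. At a point $y=\varphi(x)$ where both maps are (approximately) differentiable with nonvanishing Jacobian, the matrix $D\psi(y)=(D\varphi(x))^{-1}$ has singular values $\sigma_n^{-1}\geq\dots\geq\sigma_1^{-1}$, hence $|D\psi(y)|=\sigma_n^{-1}$ and $|J(y,\psi)|=(\sigma_1\cdots\sigma_n)^{-1}$. Using $1/q'=1-(n-1)/q$ this yields
\begin{equation*}
K_{q'}(y,\psi)=\frac{|D\psi(y)|}{|J(y,\psi)|^{1/q'}}=\frac{|\adj D\varphi(x)|}{|J(x,\varphi)|^{(n-1)/q}}\leq\frac{|D\varphi(x)|^{n-1}}{|J(x,\varphi)|^{(n-1)/q}}=K_q(x)^{n-1}.
\end{equation*}
Setting $v(x):=|\adj D\varphi(x)|^{\kappa'}|J(x,\varphi)|^{-\kappa/p}$ and using $\kappa'=\kappa/(n-1)$ together with the arithmetic identity $\kappa/q-1=\kappa/p$, one checks that $K_{q'}(y,\psi)^{\kappa'}=v(\psi(y))\,|J(y,\psi)|$ and that $v\leq K_p^{\kappa}$ almost everywhere. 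Applying the area formula to the Sobolev homeomorphism $\psi$, whose multiplicity function does not exceed $1$ by injectivity, I then obtain
\begin{equation*}
\int\limits_{\widetilde\Omega}K_{q'}(y,\psi)^{\kappa'}\,dy=\int\limits_{\widetilde\Omega}v(\psi(y))\,|J(y,\psi)|\,dy\leq\int\limits_{\Omega}v(x)\,dx\leq\int\limits_{\Omega}K_p(x)^{\kappa}\,dx=\|K_p\mid L_{\kappa}(\Omega)\|^{\kappa}<\infty.
\end{equation*}
This gives $K_{q',p'}(\psi;\widetilde\Omega)\leq K_{p,q}(\varphi;\Omega)^{n-1}$ (and in the limit case $p=q$ the pointwise bound directly gives $\ess\sup K_{q'}(\cdot,\psi)\leq(\ess\sup K_p)^{n-1}$), so by Theorem~\ref{CompTh} the mapping $\varphi^{-1}$ generates a bounded composition operator $(\varphi^{-1})^{\ast}\colon L^1_{q'}(\Omega)\to L^1_{p'}(\widetilde\Omega)$, as required.
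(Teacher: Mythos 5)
Your argument is essentially correct, but note that the paper does not prove this theorem at all: it is quoted from \cite{U93}, so there is no in-text proof to compare against. Your exponent arithmetic all checks out ($p'\leq q'$, $\kappa'=\kappa/(n-1)$, the identity $K_{q'}(y,\psi)^{\kappa'}=v(\psi(y))|J(y,\psi)|$ via $\kappa/q-\kappa/p=1$, and the pointwise bound $K_{q'}(y,\psi)=|\adj D\varphi(x)|\,|J(x,\varphi)|^{-(n-1)/q}\leq K_q(x)^{n-1}$), and the final estimate $K_{q',p'}(\varphi^{-1};\widetilde\Omega)\leq K_{p,q}(\varphi;\Omega)^{n-1}$ via the weak area formula for the injective map $\psi$ is the right conclusion. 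The route you take, however, is the modern one: you outsource the entire crux --- that a $W^1_{q,\loc}$ homeomorphism of finite distortion with $q>n-1$ has an inverse in $W^1_{1,\loc}$ of finite distortion satisfying $D\varphi^{-1}(\varphi(x))=(D\varphi(x))^{-1}$ a.e.\ off the zero set of the Jacobian --- to the regularity theory of \cite{HK14}, which postdates \cite{U93}. The original proof instead extracts the regularity and the pointwise bound on $D\varphi^{-1}$ from capacity estimates on ring condensers (concentric balls, the estimates \eqref{capballs}--\eqref{caplowerdiam}, Stepanov's theorem), exactly as the present paper does in the sufficiency part of Theorem~\ref{cap-pp}; your approach buys a shorter argument at the price of a much heavier imported theorem.

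Two points deserve explicit attention. First, before asserting $|\adj D\varphi|\in L_{q/(n-1),\loc}$ you need $D\varphi\in L_{q,\loc}$, which is not literally among the conclusions of Theorem~\ref{CompTh}; it follows from $K_p\in L_{\kappa}$ by H\"older's inequality together with $\int_A|J(x,\varphi)|\,dx\leq|\varphi(A)|$, and should be stated. Second, your pointwise identity is only established at points $y$ where $J(y,\psi)\neq 0$ and $x=\psi(y)$ is a good point for $\varphi$; you must dispose of the set $\{J(\cdot,\psi)=0\}$ (where finite distortion of $\psi$ gives $K_{q'}(\cdot,\psi)=0$) and of $\psi^{-1}(E)$ for the null set $E$ of bad points of $\varphi$ (where $\int_{\psi^{-1}(E)}|J(y,\psi)|\,dy\leq|E|=0$ forces $J(\cdot,\psi)=0$ a.e.). These are standard but are exactly where such arguments silently fail if omitted.
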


\section{Capacity inequalities for ring condensers}

In the quasiconformal mappings theory, distortion properties of ring condensers play a significant role. 
The ring condenser $R$ (see, for example, \cite{Ger69,MRV69}) in a domain $\Omega\subset \mathbb R^n$ is a pair $R=(F;G)$ of sets $F\subset G\subset\Omega$, where $F$ is a connected compact set and $G$ is a connected open set.  
The $p$-capacity of the ring $R=(F;G)$ is defined by
$$
\cp_p(F;G) =\inf\|f\vert L_p^1(\Omega)\|^p,
$$
where the greatest lower bound is taken over all continuous functions $f\in L_p^1(\Omega)$ with a compact support in $G$ and such that $f\geq 1$ on $F$. Such functions are called admissible functions for the ring $R=(F;G)$. 

In this section we prove that weak $(p,q)$-quasiconformal mappings, $n-1<q\leq p<\infty$, can be characterized by considering the ring condensers only. Before formulating the main results, we recall the estimates of the capacity of ring condensers.

Let the condenser $R=(F;G)$ consist of two concentric balls $F = \{x: |x| \leq r_F\}$, $G= \{x: |x| < r_G\}$, $0<r_F<r_G$, then (see, e.g. \cite{GResh}) 
\begin{equation}\label{capballs}
  \cp_p^{1/p}(F;G) = C(n,p) (r_F^{\frac{p-n}{p-1}}-r_G^{\frac{p-n}{p-1}})^{\frac{1-p}{p}}.
\end{equation}

In the case of arbitrary $F$ and $G$, the following lower estimate for the capacity of a ring condenser was given in \cite{M}
\begin{equation}\label{caplower}
   \cp_p^{1/p}(F; G) \geq \frac{\inf m_{n-1}S}{|G|^{1-\frac{1}{p}}},
\end{equation}
where $S$ is a $C^\infty$-manifold, which is a boundary of an open set $U \supset F$, $\overline U \subset G$, $m_{n-1} S$ its $(n-1)$-Lebesgue measure, and infimum is taken over all such manifolds.

If we additionally assume that $F$ is a connected set, then the following estimate holds (see \cite{K86, VU98}):
\begin{equation}\label{caplowerdiam}
   \cp_p^{1/p}(F; G) \geq C(n,p) \frac{(\diam(F))^{\frac{1}{n-1}}}{|G|^{\frac{1}{n-1}-\frac{1}{p}}}.
\end{equation}

We need also an upper estimate of the capacity of the ring condenser (see, e.g. \cite{MRV69, K86}),
\begin{equation}\label{capupper}
   \cp_p^{1/p}(F;G) \leq \frac{|G \setminus F|^{\frac{1}{p}}}{\dist(F, \partial G)}
\end{equation}

Now we formulate the characterization of weak $p$-quasiconformal mappings in terms of the distortion of ring condensers, that generalized results of \cite{Ger62}. 

\begin{thm}
\label{cap-pp}
\it Let $n-1<p<\infty$, then $\varphi :\Omega\to \widetilde{\Omega}$ is a weak $p$-quasiconformal mapping if and only if there exists a constant $C_{p}(\varphi;\Omega)>0$ such that for every ring condenser $(F,G)\subset\widetilde{\Omega}$ the inequality
\begin{equation}
\label{ringca}
\cp_{p}^{1/p}(\varphi^{-1}(F);\varphi^{-1}(G))
\leq C_{p}(\varphi;\Omega)\cp_{p}^{1/p}(F;G)
\end{equation}
holds.
\end{thm}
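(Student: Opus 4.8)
The plan is to route both implications through the composition-operator description of Theorem~\ref{CompTh}, taken with $q=p$ (so $\kappa=\infty$ and $K_{p,p}(\varphi;\Omega)=\|K_p\mid L_\infty(\Omega)\|$): a homeomorphism is weak $p$-quasiconformal exactly when it induces a bounded operator $\varphi^\ast\colon L^1_p(\widetilde\Omega)\to L^1_p(\Omega)$. The forward implication then becomes a direct pullback of admissible functions, while the reverse implication must reconstruct the analytic side of Theorem~\ref{CompTh} from the ring inequality \eqref{ringca}, using the explicit capacity estimates \eqref{capballs}--\eqref{capupper}.

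Forward direction. Assume $\varphi$ is weak $p$-quasiconformal, so $\varphi^\ast$ is bounded with $\|\varphi^\ast\|\le K_{p,p}(\varphi;\Omega)$. Fix a ring condenser $(F,G)\subset\widetilde\Omega$ and an admissible function $f$: continuous, $f\in L^1_p(\widetilde\Omega)$, $f\geq1$ on $F$, with $\operatorname{supp}f$ a compact subset of $G$. Since $\varphi$ is a homeomorphism, $\varphi^{-1}(F)$ is a compact continuum and $\varphi^{-1}(G)$ an open connected set containing it, so $(\varphi^{-1}(F);\varphi^{-1}(G))$ is again a ring condenser. Then $\varphi^\ast f=f\circ\varphi$ is continuous, vanishes off the compact set $\varphi^{-1}(\operatorname{supp}f)\subset\varphi^{-1}(G)$, satisfies $\varphi^\ast f\geq1$ on $\varphi^{-1}(F)$, and belongs to $L^1_p(\Omega)$ by boundedness of $\varphi^\ast$; hence it is admissible for the pulled-back condenser. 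Therefore
\[
\cp_p^{1/p}(\varphi^{-1}(F);\varphi^{-1}(G))\leq \|\varphi^\ast f\mid L^1_p(\Omega)\|\leq K_{p,p}(\varphi;\Omega)\,\|f\mid L^1_p(\widetilde\Omega)\|,
\]
and taking the infimum over admissible $f$ gives \eqref{ringca} with $C_p(\varphi;\Omega)=K_{p,p}(\varphi;\Omega)$.

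Reverse direction. Assuming \eqref{ringca} for all ring condensers, I would recover the hypotheses of Theorem~\ref{CompTh}: $\varphi\in W^1_{p,\loc}(\Omega)$, finite distortion, and $\ess\sup_x K_p(x)<\infty$. The mechanism is to test \eqref{ringca} on concentric spherical rings. Fixing $\tilde x\in\widetilde\Omega$ and applying \eqref{ringca} to $F=\overline B(\tilde x,r)$, $G=B(\tilde x,2r)$, the right-hand side is computed explicitly from \eqref{capballs}; on the left, the lower bound \eqref{caplowerdiam}, legitimate precisely because $\varphi^{-1}(F)$ is a continuum, controls $\diam\varphi^{-1}(F)$ against $|\varphi^{-1}(G)|$, while the upper bound \eqref{capupper} yields the complementary control of the minimal stretching. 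Together these bound the linear oscillation of $\varphi^{-1}$ on small spheres, and letting $r\to0$ delivers a.e.\ differentiability, the Sobolev regularity, and a bound on the dilatation. The restriction $n-1<p$ enters here in an essential way: it is exactly the range in which continua carry positive $p$-capacity, so that \eqref{caplowerdiam} is non-degenerate and the diameter is genuinely controlled.

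The hard part is turning this infinitesimal, ring-based information into the global analytic conclusion, and this is where I expect the main obstacle. Ring condensers have connected plates, so \eqref{ringca} is a priori weaker than the capacity inequality for arbitrary condensers that is equivalent, by \cite{U93,VU98}, to boundedness of $\varphi^\ast$; passing to the component of $G$ meeting $F$ costs nothing, but a disconnected compact plate must be reassembled from its components without degrading the constant, and the local distortion estimates must be patched into a uniform bound on $K_p$ through a Vitali-type covering argument together with the absolute-continuity properties that finiteness of $K_{p,p}$ forces. Once $\varphi\in W^1_{p,\loc}(\Omega)$ is shown to have finite distortion with $\ess\sup_x K_p(x)<\infty$, Theorem~\ref{CompTh} identifies it as weak $p$-quasiconformal and closes the equivalence.
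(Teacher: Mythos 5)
Your necessity argument is exactly the paper's: pull back an admissible function through the bounded operator $\varphi^{\ast}$ of Theorem~\ref{CompTh} and take the infimum. The gap is in the sufficiency direction, where your sketch stops precisely at the step that carries the content of the theorem. Testing \eqref{ringca} on concentric spherical rings, as you propose, combines \eqref{capballs} with \eqref{caplowerdiam} and yields only the bound $(L(y,\varphi^{-1}))^{1/(n-1)}\leq C\,|J(y,\varphi^{-1})|^{(p-n+1)/(p(n-1))}$, i.e.\ a.e.\ differentiability of $\varphi^{-1}$ (via Stepanov) and the estimate $|D\varphi^{-1}(y)|^{p'}\leq C|J(y,\varphi^{-1})|$ with $p'=p/(p-n+1)$. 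This controls the \emph{maximal} stretching of $\varphi^{-1}$; it does not control the minimal stretching $l(D\varphi^{-1}(y))$, which is what $K_p(\varphi;\Omega)=\ess\sup|J(y,\varphi^{-1})|^{1/p}/l(D\varphi^{-1}(y))$ requires. Your claim that \eqref{capupper} supplies "the complementary control of the minimal stretching" on spherical rings is not correct: for concentric balls the image-side capacity is already computed exactly by \eqref{capballs}, and the two bounds are genuinely inequivalent (for $n=3$, $p=3$, the linear map with singular values $\lambda_1=\lambda_2=M$, $\lambda_3=1$ satisfies $\lambda_1^{p'}\leq C\prod\lambda_i$ for all $M$ but violates $\prod\lambda_i^{1/p}/\lambda_n\leq C$ as $M\to\infty$). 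The missing idea in the paper is a degenerate anisotropic condenser $\hat R=(\hat F;\hat G)$ adapted to the eigendirections of $D\varphi^{-1}(y_0)$ --- a flat $(n-1)$-dimensional plate inside a thin slab, see \eqref{condbox} --- whose image capacity is bounded from below by the Maz'ya estimate \eqref{caplower} through the $(n-1)$-measure of separating manifolds, and whose capacity in $\widetilde\Omega$ is bounded from above by \eqref{capupper}; letting $\varepsilon\to0$ and then $t\to0$ produces exactly the pointwise inequality $\prod_{i=1}^n\lambda_i^{1/p}/\lambda_n\leq C\,C_p(\varphi;\Omega)$ at each point of differentiability.

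Your worry about reassembling disconnected plates and running a Vitali covering argument is a symptom of not having this mechanism rather than a genuine obstacle: the paper never upgrades \eqref{ringca} to arbitrary condensers. Instead, once the spherical-ring step shows that $\varphi^{-1}$ is weak $p'$-quasiconformal, the duality Theorem~\ref{CompThD} is invoked to conclude that $\varphi$ itself lies in $W^1_{1,\loc}(\Omega)$ and has finite distortion, and then the pointwise slab-condenser estimate, combined with $\ess\sup_x|D\varphi(x)|/|J(x,\varphi)|^{1/p}\leq\ess\sup_y|J(y,\varphi^{-1})|^{1/p}/l(D\varphi^{-1}(y))$, feeds directly into Theorem~\ref{CompTh}. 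Without the condenser \eqref{condbox} (or an equivalent device) the sufficiency direction does not close.
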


\begin{proof} In the case $p=n$ we have the well known property of quasiconformal mappings \cite{Ger62,V71}. Hence we will prove the theorem in the case $p\ne n$.

\vskip 0.2cm
\noindent
{\it Necessity.} Let $\varphi :\Omega\to \widetilde{\Omega}$ be a weak $p$-quasiconformal mapping. Then by Theorem~\ref{CompTh} the inequality 
$$
\|\varphi^{\ast}(f)\mid L^1_p(\Omega)\|\leq K_{p}(\varphi;\Omega) \|f\mid L^1_p(\widetilde{\Omega})\|
$$
holds for every function $f\in L^1_p(\widetilde{\Omega})$. Let $f$ be an admissible function for the ring condenser $R=(F;G)$, then $f\circ\varphi$ is an admissible function for the ring condenser $\varphi^{-1}(R)=(\varphi^{-1}(F);\varphi^{-1}(G))$. Hence, by the definition of the capacity we have 
$$
\cp_p^{1/p} (\varphi^{-1}(F);\varphi^{-1}(G))\leq 
\|\varphi^{\ast}(f)\mid L^1_p(\Omega)\|\leq K_{p}(\varphi;\Omega) \|f\mid L^1_p(\widetilde{\Omega})\|.
$$
Since $f$ is arbitrary admissible function, we obtain
$$
\cp_p^{1/p} (\varphi^{-1}(F);\varphi^{-1}(G))\leq K_{p}(\varphi;\Omega) \cp_{p}^{1/p}(F;G).
$$
with $C_{p}(\varphi;\Omega) = K_{p}(\varphi;\Omega)$

\vskip 0.2cm
\noindent
{\it Sufficiency.}
By Theorem~\ref{CompTh}, it is sufficient to prove that $\varphi \in W^1_{1,\loc}(\Omega)$, has finite distortion and 
$$
K_{p}(\varphi;\Omega) = \ess\sup\limits_{x\in\Omega}\frac{|D\varphi(x)|}{|J(x,\varphi)|^{\frac{1}{p}}}<\infty
$$
On the first step we prove that $\varphi^{-1}$ satisfies the conditions of Theorem~\ref{CompTh} and generates a bounded composition operator
$$
(\varphi^{-1})^\ast : L^1_{p'}(\Omega) \to L^1_{p'}(\widetilde\Omega), \quad n-1 < p'<\infty,
$$
where $p' = \frac{p}{p-(n-1)}$.

In \cite{Ger69} it was proved that in the case $n-1<p<n$ the capacity inequality~(\ref{ringca}) implies $\varphi^{-1}\in W^1_{\infty,\loc}(\widetilde\Omega)$. In the case $p>n$ by using methods of \cite{U93}, it follows that the capacity inequality 
$$
\cp_p^{1/p}(\varphi^{-1}(F);\varphi^{-1}(G)) \leq C_{p}(\varphi;\Omega) \cp_p^{1/p}(F;G)
$$
implies $\varphi^{-1}\in W^1_{1,\loc}(\widetilde\Omega)$.

Now, for every $y \in \widetilde\Omega$, we consider a ring condenser $(F_r; G_r)$: 
$$
F_r = \{z : |y-z| \leq r\}, \,\,G_r = \{z : |y-z| < 2r\},
$$
where small enough $r>0$ is chosen such that $(F_r; G_r) \subset \widetilde\Omega$. Then, by \eqref{capballs},
$$
  \cp_p^{1/p}(F_r; G_r) = C(p,n) r^{\frac{n-p}{p}}.
$$
By \eqref{caplowerdiam}, the condenser $(\varphi^{-1}(F_r); \varphi^{-1}(G_r))$ can be estimates by
$$
  \cp_p^{1/p}(\varphi^{-1}(F_r); \varphi^{-1}(G_r)) \geq C(n,p) \frac{(\diam(\varphi^{-1}(F_r)))^{\frac{1}{n-1}}}{|\varphi^{-1}(G_r)|^{\frac{1}{n-1}-\frac{1}{p}}}.
$$

Substituting these estimates in the inequality~\eqref{ringca} we obtain
$$
(\diam(\varphi^{-1}(F_r)))^{\frac{1}{n-1}} \leq C(n,p) C_{p}(\varphi;\Omega) r^{\frac{n-p}{p}} |\varphi^{-1}(G_r)|^{\frac{1}{n-1}-\frac{1}{p}},
$$
or, in another form,
$$
\left(\frac{\diam(\varphi^{-1}(F_r))}{r}\right)^{\frac{1}{n-1}} \leq C(n,p) C_{p}(\varphi;\Omega) \left(\frac{|\varphi^{-1}(G_r)|}{|G_r|}\right)^{\frac{1}{n-1}-\frac{1}{p}}.
$$
Taking $r \to 0$, we obtain 
$$
(L(y, \varphi^{-1}))^{\frac{1}{n-1}} \leq C(n,p) C_{p}(\varphi;\Omega) |J(y, \varphi^{-1})|^{\frac{p-(n-1)}{p(n-1)}},
$$
for almost all $y \in \widetilde\Omega$,
where 
$$
L(y, \varphi^{-1}) = \limsup\limits_{r \to 0}\left(\max\limits_{|y-z|=r}\frac{|\varphi^{-1}(y)-\varphi^{-1}(z)|}{r}\right),
$$ 
and the volume derivative 
$$
|J(y, \varphi^{-1})|=\lim\limits_{r \to 0}\frac{|\varphi^{-1}(B(y,r)))|}{|B(y,r)|},
$$
are finite for almost all $y \in \widetilde\Omega$. Hence by Stepanov's Theorem the inverse mapping $\varphi^{-1}$ is differentiable a.e. in $\widetilde\Omega$.

Since $|D\varphi^{-1}(y)|\leq L(y, \varphi^{-1})$ for almost all $y \in \widetilde\Omega$, raising the above inequality to the power $\frac{p(n-1)}{p-(n-1)}=p'(n-1)$, we obtain
$$
|D\varphi^{-1}(y)|^{p'} \leq C(n,p) (C_{p}(\varphi;\Omega))^{p'(n-1)} |J(y, \varphi^{-1})| \text{ a.e. in } \widetilde\Omega.
$$
This pointwise inequality implies that $\varphi^{-1}$ is a mapping of finite distortion. 

It also means that
$$
K_{p}(\varphi^{-1};\widetilde\Omega) = \ess\sup\limits_{y\in\widetilde\Omega}
\left(\frac{|D\varphi^{-1}(y)|^{p'}}{|J(y, \varphi^{-1})|}\right)^{\frac{1}{p'}} \leq C(n,p)(C_{p}(\varphi;\Omega))^{(n-1)} < \infty,
$$
and, therefore, by~Theorem \ref{CompTh} $\varphi^{-1}$ generates a bounded composition operator
 $$
   (\varphi^{-1})^\ast : L^1_{p'}(\Omega) \to L^1_{p'}(\widetilde\Omega), \quad n-1 < p'<\infty
 $$
 
Hence, by Theorem \ref{CompThD}, the mapping $\varphi$ generates a bounded composition operator
$$
\varphi^\ast : L^1_{p''}(\widetilde\Omega) \to L^1_{p''}(\Omega), \quad 1 < p''<\infty.
$$
So $\varphi \in W^1_{1,\loc}(\Omega)$ and is the mapping of finite distortion. 

Now we prove that
 $$
   K_{p}(\varphi;\Omega) = \ess\sup\limits_{x\in\Omega}\left(\frac{|D\varphi(x)|^p}{|J(x,\varphi)|}\right)^{\frac{1}{p}}= \ess\sup\limits_{x\in\Omega}\frac{|D\varphi(x)|}{|J(x,\varphi)|^{\frac{1}{p}}} \leq C_{p}(\varphi;\Omega).
 $$

Since
$$
  \ess\sup\limits_{x\in\Omega}\frac{|D\varphi(x)|}{|J(x,\varphi)|^{\frac{1}{p}}} \leq \ess\sup\limits_{y\in\widetilde\Omega}\frac{|J(y,\varphi^{-1})|^{\frac{1}{p}}}{(l(D\varphi^{-1}(y)))}
$$
it is sufficient to prove the following estimate for the inverse mapping:
 \begin{equation}\label{estinv-pp}
   \ess\sup\limits_{y\in\widetilde\Omega}\frac{|J(y,\varphi^{-1})|^{\frac{1}{p}}}{(l(D\varphi^{-1}(y)))} \leq C_{p}(\varphi;\Omega).
 \end{equation}
 
To prove \eqref{estinv-pp}, we consider a condenser $\hat R = (\hat F; \hat G)$ of the following specific form. We will also use this condenser in the proof of the next theorem in the case of weak $(p,q)$-quasiconformal mappings. 

Since $\varphi^{-1}$ is differentiable almost everywhere in $\widetilde{\Omega}$ we consider an point $y_0 \in \widetilde\Omega$, where $\varphi^{-1}$ is differentiable.
Denote as $\lambda_1 \geq \lambda_2 \geq \dots \geq \lambda_n > 0$ semiaxes of an ellipsoid, which is an image of the unit ball in $\widetilde\Omega$ under the linear mapping $D\varphi^{-1}(y_0)$. Without loss of generality, we can assume that $y_0 = 0$, $\varphi^{-1}(0) = 0$ and $\left(D\varphi^{-1}(0)\right)(e_i) = \lambda_i e_i$, $i = 1, \dots, n$.  Fix $t>0$ and choose $r>0$ such that the condenser $\hat{R}$ belongs to $\widetilde\Omega$, where 
 \begin{equation}\label{condbox}
 \begin{split}
 &\hat{R}=(\hat{F};\hat{G}), \\
 &\hat{F} = \{y : y_n=0, |y_i|\leq r, i=1,\dots,n-1\}, \\
 &\hat{G} = \{y : |y_n|\leq rt\lambda_n, |y_i|<r + rt\lambda_i, i=1,\dots,n-1\}. 
 \end{split}
 \end{equation}
 
 For this condenser, by \eqref{capupper}
 $$
   \cp_p^{1/p}(F;G) \leq \frac{|G \setminus F|^{\frac{1}{p}}}{\dist(F, \partial G)} \leq \frac{2^{\frac{n}{p}} r^{\frac{n}{p}} \lambda_n^{\frac{1}{p}}t^{\frac{1}{p}}\prod\limits_{i=1}\limits^{n-1} (1+t\lambda_i)^{\frac{1}{p}}}{rt\lambda_n} = \frac{2^{\frac{n}{p}}\prod\limits_{i=1}\limits^{n-1} (1+t\lambda_i)^{\frac{1}{p}}}{\lambda_n^{1-\frac{1}{p}}t^{1-\frac{1}{p}}r^{1-\frac{n}{p}}} 
 $$
 
For the application of the  lower estimate \eqref{caplower} of the condenser $(\varphi^{-1}(\hat{F}); \varphi^{-1}(\hat{G}))$, we  estimate $|\varphi^{-1}(G)|$ and $m_{n-1}S$, where $S$ is the $C^\infty$-manifold from \eqref{caplower}.
Since $\varphi^{-1}$ is differentiable at the point $y_0=0$, we can fix an arbitrary $\varepsilon$, $0< \varepsilon \lambda_n$ and choose $r>0$ such that $|\varphi^{-1}(y) - (D\varphi^{-1}(0))(y)| < \varepsilon r$ for $y\in G$. Then, $\varphi^{-1}(G)$ is a subset of a parallelepiped
 $$
   P_G = \{x : |x_n| \leq rt\lambda^2_n+\varepsilon r, |x_i|\leq r\lambda_i + rt\lambda^2_i+ \varepsilon r, i=1,\dots,n-1\},
 $$
 and a projection of $\varphi^{-1}(F)$ on $x_n=0$ contain as a subset $(n-1)$-dimensional parallelepiped
 $$
   P_F = \{x : x_n=0, |x_i|\leq r\lambda_i-\varepsilon r, i=1,\dots,n-1\}.
 $$
 Therefore,
 \begin{equation}\label{estboxlow}
 \begin{split}
 & |\varphi^{-1}(G)| \leq |P_G|= 2^nr^n(t\lambda^2_n+\varepsilon)\prod\limits_{i=1}\limits^{n-1}(\lambda_i+t\lambda^2_i+\varepsilon),\\
 & m_{n-1}S \geq 2m_{n-1}P_F = 2^nr^{n-1}\prod\limits_{i=1}\limits^{n-1}(\lambda_i-\varepsilon).
 \end{split}
 \end{equation} 

Because of   
$|J(y_0,\varphi^{-1})| = \prod\limits_{i=1}\limits^{n}\lambda_i$, $l(D\varphi^{-1}(y_0)) = \lambda_n$, the inequality \eqref{estinv-pp} is equal to
 $$
  \frac{\prod\limits_{i=1}\limits^{n}\lambda_i^{\frac{1}{p}}}{\lambda_n} \leq C_{p}(\varphi;\Omega).
 $$
 
Combine the estimates for the condenser $\hat{R}=(\hat{F};\hat{G})$ we obtain from the inequality  \eqref{ringca} 
 $$
 \frac{2^{\frac{n}{p}}r^{\frac{n}{p}-1}\prod\limits_{i=1}\limits^{n-1}(\lambda_i-\varepsilon)}{(t\lambda^2_n+\varepsilon)^{1-\frac{1}{p}}\prod\limits_{i=1}\limits^{n-1}(\lambda_i+t\lambda^2_i+\varepsilon)^{1-\frac{1}{p}}}
 \leq C(n,p)C_{p}(\varphi;\Omega) \frac{2^{\frac{n}{p}}\prod\limits_{i=1}\limits^{n-1} (1+t\lambda_i)^{\frac{1}{p}}}{\lambda_n^{1-\frac{1}{p}}t^{1-\frac{1}{p}}r^{1-\frac{n}{p}}}.
 $$
 Let $\varepsilon \to 0$, then
 $$
 \frac{\prod\limits_{i=1}\limits^{n-1}\lambda_i}{\lambda_n^{1-\frac{1}{p}}\prod\limits_{i=1}\limits^{n-1}(\lambda_i+t\lambda^2_i)^{1-\frac{1}{p}}}
 \leq C(n,p)C_{p}(\varphi;\Omega) \prod\limits_{i=1}\limits^{n-1} (1+t\lambda_i)^{\frac{1}{p}}.
 $$
 Further, tending $t \to 0$, we obtain
 $$
 \frac{\prod\limits_{i=1}\limits^{n-1}\lambda_i}{\lambda_n^{1-\frac{1}{p}}\prod\limits_{i=1}\limits^{n-1}\lambda_i^{1-\frac{1}{p}}}
 \leq C(n,p)C_{p}(\varphi;\Omega).
 $$
 Finally,
 $$
  \frac{\prod\limits_{i=1}\limits^{n}\lambda_i^{\frac{1}{p}}}{\lambda_n} \leq C(n,p) C_{p}(\varphi;\Omega)
 $$
 and the theorem is proved.

\end{proof}

In the case $n-1<q<p<\infty$, we use the set functions associated with norms of composition operators, which were introduced in \cite{U93}.
Let us recall the notion of the set function $\widetilde{\Phi}_{p,q}(\widetilde A)$, defined on open bounded subsets $\widetilde A\subset\widetilde{\Omega}$ and associated with the composition operator $\varphi^\ast: L^1_p(\widetilde{\Omega})\to L^1_{q}(\Omega)$, $1 \leq q < p < \infty$:

\begin{equation}\label{setfunc}
\widetilde{\Phi}_{p,q}(\widetilde{A})=\sup\limits_{f\in L^1_p(\widetilde{A})\cap C_0(\widetilde{A})}\left(\frac{\|\varphi^{\ast}(f)\mid L^1_q(\Omega)\|}{\|f\mid L^1_p(\widetilde{A})\|}\right)^{\kappa}, \quad 1/{\kappa}=1/q-1/p.
\end{equation}

\begin{thm}\cite{U93}
\label{thmsetfunc}
\label{CompPhi} Let a homeomorphism $\varphi:\Omega\to\widetilde{\Omega}$
between two domains $\Omega$ and $\widetilde{\Omega}$ generates a bounded composition
operator 
\[
\varphi^{\ast}:L^1_p(\widetilde{\Omega})\to L^1_q(\Omega),\,\,\,1\leq q< p\leq\infty.
\]
Then the function $\widetilde{\Phi}_{p,q}(\widetilde{A})$, defined by (\ref{setfunc}), is a bounded monotone countably additive set function defined on open bounded subsets $\widetilde{A}\subset\widetilde{\Omega}$.
\end{thm}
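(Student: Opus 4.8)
The plan is to verify the three asserted properties of $\widetilde{\Phi}_{p,q}$ separately, the third (countable additivity) carrying essentially all of the work. Throughout I would use the basic observation that a function $f\in L^1_p(\widetilde A)\cap C_0(\widetilde A)$ extends by zero to an element of $L^1_p(\widetilde\Omega)$ without changing its seminorm, and that $\varphi^\ast(f)=f\circ\varphi$ is then supported in $\varphi^{-1}\big(\overline{\operatorname{supp} f}\big)$, so $\|\varphi^\ast(f)\mid L^1_q(\Omega)\|$ is likewise unaffected by the extension.

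\emph{Boundedness and monotonicity.} For boundedness I would bound each admissible ratio in \eqref{setfunc} by the operator norm furnished by the hypothesis, i.e. $\|\varphi^\ast(f)\mid L^1_q(\Omega)\|\le K_{p,q}(\varphi;\Omega)\,\|f\mid L^1_p(\widetilde A)\|$, which gives $\widetilde{\Phi}_{p,q}(\widetilde A)\le K_{p,q}(\varphi;\Omega)^{\kappa}<\infty$ uniformly in $\widetilde A$. For monotonicity, if $\widetilde A_1\subset\widetilde A_2$ then, after extension by zero, every competitor for $\widetilde A_1$ is a competitor for $\widetilde A_2$ realizing the same ratio, so the supremum can only grow; hence $\widetilde{\Phi}_{p,q}(\widetilde A_1)\le\widetilde{\Phi}_{p,q}(\widetilde A_2)$.

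\emph{Countable additivity, easy direction.} Let $\widetilde A=\bigsqcup_i\widetilde A_i$ be a disjoint union of open sets inside a bounded open set. The structural point is that each $\widetilde A_i$ is relatively clopen in $\widetilde A$ (its complement $\bigsqcup_{j\ne i}\widetilde A_j$ is open), so for $f\in L^1_p(\widetilde A)\cap C_0(\widetilde A)$ the truncations $f_i:=f\cdot\chi_{\widetilde A_i}$ are continuous with compact support in $\widetilde A_i$, and both $\nabla f$ and $\varphi^\ast(f)$ split as sums over $i$ with pairwise disjoint supports (only finitely many $f_i$ being nonzero). For the subadditivity $\widetilde{\Phi}_{p,q}(\widetilde A)\le\sum_i\widetilde{\Phi}_{p,q}(\widetilde A_i)$ I would start from $\|\varphi^\ast(f)\mid L^1_q(\Omega)\|^q=\sum_i\|\varphi^\ast(f_i)\mid L^1_q(\Omega)\|^q$, estimate each summand by $\widetilde{\Phi}_{p,q}(\widetilde A_i)^{1/\kappa}\|f_i\mid L^1_p(\widetilde A_i)\|$, and apply Hölder's inequality with exponents $p/q$ and $p/(p-q)$. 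The bookkeeping $q/\kappa=(p-q)/p$ collapses the two Hölder factors exactly into $\big(\sum_i\widetilde{\Phi}_{p,q}(\widetilde A_i)\big)^{(p-q)/p}\,\|f\mid L^1_p(\widetilde A)\|^q$; raising to the power $\kappa/q=p/(p-q)$ and taking the supremum over $f$ yields the claim.

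\emph{The main obstacle} is the reverse inequality (superadditivity). I would treat finitely many sets $\widetilde A_1,\dots,\widetilde A_N$ first: pick near-optimal test functions $f_i$ for each $\widetilde A_i$ and, using the disjointness of supports, insert the linear combinations $g=\sum_i c_i f_i$ into the supremum defining $\widetilde{\Phi}_{p,q}\big(\bigcup_{1}^{N}\widetilde A_i\big)$. Writing $a_i=\|\varphi^\ast(f_i)\mid L^1_q(\Omega)\|^q$ and $b_i=\|f_i\mid L^1_p(\widetilde A_i)\|^p$, the admissible ratio becomes
\[
\frac{\big(\sum_i |c_i|^q a_i\big)^{\kappa/q}}{\big(\sum_i |c_i|^p b_i\big)^{\kappa/p}},
\]
and maximizing over the coefficients $c_i$ is a finite-dimensional concave optimization, solved by $|c_i|$ proportional to a suitable power of $a_i/b_i^{q/p}$, whose optimal value is precisely $\sum_i\big(a_i^{1/q}/b_i^{1/p}\big)^{\kappa}$. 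This reverse-Hölder optimization is the technical heart of the argument: it converts a sum of individual ratios into a single admissible ratio for the union. Letting the $f_i$ approach optimality gives $\widetilde{\Phi}_{p,q}\big(\bigcup_{1}^{N}\widetilde A_i\big)\ge\sum_{1}^{N}\widetilde{\Phi}_{p,q}(\widetilde A_i)$, and then monotonicity (since $\bigcup_1^N\widetilde A_i\subset\widetilde A$) together with $N\to\infty$ upgrades this to $\widetilde{\Phi}_{p,q}(\widetilde A)\ge\sum_i\widetilde{\Phi}_{p,q}(\widetilde A_i)$. Combined with the subadditivity above, this proves countable additivity and completes the proof.
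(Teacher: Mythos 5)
The paper does not prove this theorem; it is quoted from \cite{U93} without proof, so there is nothing internal to compare against. Your argument is correct and is essentially the standard proof of this result (as in \cite{U93} and \cite{VU04}): boundedness from the operator norm, monotonicity by zero-extension, subadditivity via the splitting $f=\sum_i f\chi_{\widetilde A_i}$ (using that each $\widetilde A_i$ is relatively clopen in the disjoint union, so the truncations stay in $C_0(\widetilde A_i)$ and only finitely many are nonzero on a compact support) followed by H\"older with exponents $p/(p-q)$ and $p/q$, and superadditivity via the reverse optimization over linear combinations $\sum_i c_i f_i$ of near-extremal functions with disjoint supports; your identification of the optimal value as $\sum_i\bigl(a_i^{1/q}/b_i^{1/p}\bigr)^{\kappa}$ checks out, since the constrained maximum of $\sum t_i^q a_i$ over $\sum t_i^p b_i=1$ is $\bigl(\sum(a_i b_i^{-q/p})^{p/(p-q)}\bigr)^{(p-q)/p}$ and the exponent $\kappa/q$ collapses this to the stated sum. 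The only point worth flagging is that the statement allows $p=\infty$ (so $\kappa=q$), where your H\"older exponents degenerate; there the same argument goes through more simply, with $\|f\mid L^1_\infty\|=\max_i\|f_i\mid L^1_\infty\|$ in the subadditive direction and $c_i\equiv 1$ with normalized $f_i$ in the superadditive one, but this case should be mentioned explicitly.
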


Recall that a nonnegative mapping $\Phi$ defined on open subsets of $\Omega$ is called a monotone countably additive set function \cite{RR55,VU04} if

\noindent
1) $\Phi(U_1)\leq \Phi(U_2)$ if $U_1\subset U_2\subset\Omega$;

\noindent
2)  for any collection $U_i \subset U \subset \Omega$, $i=1,2,...$, of mutually disjoint open sets
$$
\sum_{i=1}^{\infty}\Phi(U_i) = \Phi\left(\bigcup_{i=1}^{\infty}U_i\right).
$$

The following lemma gives properties of monotone countably additive set functions defined on open subsets of $\Omega\subset \mathbb R^n$ \cite{RR55,VU04}.

\begin{lem}
\label{lem:AddFun}
Let $\Phi$ be a monotone countably additive set function defined on open subsets of the domain $\Omega\subset \mathbb R^n$. Then

\noindent
(a) at almost all points $x\in \Omega$ there exists a finite derivative
$$
\lim\limits_{r\to 0}\frac{\Phi(B(x,r))}{|B(x,r)|}=\Phi'(x);
$$

\noindent
(b) $\Phi'(x)$ is a measurable function;

\noindent
(c) for every open set $U\subset \Omega$ the inequality
$$
\int\limits_U\Phi'(x)~dx\leq \Phi(U)
$$
holds.
\end{lem}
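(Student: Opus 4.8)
The plan is to prove all three assertions by elementary covering arguments, using the defining properties of $\Phi$ only on \emph{disjoint} families of balls, so that neither an extension of $\Phi$ to a Borel measure nor any subadditivity of $\Phi$ is required. Throughout one localizes to a relatively compact subdomain $\Omega'\Subset\Omega$, on which $\Phi(\Omega')<\infty$, and writes $\overline D\Phi(x)=\limsup_{r\to0}\Phi(B(x,r))/|B(x,r)|$ and $\underline D\Phi(x)=\liminf_{r\to0}\Phi(B(x,r))/|B(x,r)|$.

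First I would establish the finiteness of the upper derivative. For $\lambda>0$ set $E_\lambda=\{x\in\Omega':\overline D\Phi(x)>\lambda\}$. Every point of $E_\lambda$ is the centre of arbitrarily small balls $B(x,r)\subset\Omega'$ with $\Phi(B(x,r))>\lambda|B(x,r)|$; these form a Vitali cover of $E_\lambda$, so by the Vitali covering theorem one extracts a countable disjoint subfamily $\{B_j\}$ covering $E_\lambda$ up to a set of measure zero. Since the $B_j$ are disjoint open sets, monotonicity and countable additivity give $\sum_j\Phi(B_j)=\Phi(\bigsqcup_jB_j)\le\Phi(\Omega')$, whence
\[
|E_\lambda|\le\sum_j|B_j|\le\frac1\lambda\sum_j\Phi(B_j)\le\frac{\Phi(\Omega')}{\lambda}.
\]
Letting $\lambda\to\infty$ shows $\overline D\Phi<\infty$ a.e.

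The main step is the a.e.\ equality $\overline D\Phi=\underline D\Phi$. Fixing rationals $a<b$ and the set $E=\{x\in\Omega':\underline D\Phi(x)<a<b<\overline D\Phi(x)\}$, I would show $|E|=0$. Given $\varepsilon>0$, choose by outer regularity an open $V$ with $E\subset V\subset\Omega'$ and $|V|\le|E|+\varepsilon$. Using $\underline D\Phi<a$ on $E$, a Vitali cover by balls $B\subset V$ with $\Phi(B)<a|B|$ yields a disjoint subfamily $\{B_j\}$ covering $E$ up to measure zero, with $\sum_j\Phi(B_j)<a|V|\le a(|E|+\varepsilon)$. Inside the open set $U=\bigsqcup_jB_j$ I would then use $\overline D\Phi>b$ to extract a second disjoint family $\{B'_k\}$, each $B'_k$ contained in some $B_j$, covering $E$ up to measure zero and satisfying $\Phi(B'_k)>b|B'_k|$, so that $\sum_k\Phi(B'_k)>b|E|$. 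The crucial regrouping is that, for each fixed $j$, the balls $B'_k\subset B_j$ are disjoint open subsets of $B_j$, hence $\sum_{B'_k\subset B_j}\Phi(B'_k)\le\Phi(B_j)$; summing over $j$ gives $\sum_k\Phi(B'_k)\le\sum_j\Phi(B_j)$. Combining,
\[
b|E|<\sum_k\Phi(B'_k)\le\sum_j\Phi(B_j)<a(|E|+\varepsilon),
\]
and $\varepsilon\to0$ forces $(b-a)|E|\le0$, i.e.\ $|E|=0$. Taking the union over all rational pairs $a<b$ proves that $\Phi'(x)=\lim_{r\to0}\Phi(B(x,r))/|B(x,r)|$ exists and is finite a.e., which is (a). Assertion (b) then follows because $\Phi'$ is the a.e.\ limit of the functions $x\mapsto\Phi(B(x,r))/|B(x,r)|$ along a countable sequence $r\to0$, each of which is measurable (its numerator is, e.g., lower semicontinuous in $x$ by continuity of $\Phi$ from below on balls).

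For (c) I would fix an open $U\subset\Omega'$. By the Lebesgue differentiation theorem $|B(x,r)|^{-1}\int_{B(x,r)}\Phi'\,dy\to\Phi'(x)$ for a.e.\ $x$, while part (a) gives $\Phi(B(x,r))/|B(x,r)|\to\Phi'(x)$; hence for every $\eta>0$ and a.e.\ $x\in U$ there are arbitrarily small balls $B(x,r)\subset U$ with $\int_{B(x,r)}\Phi'\,dy\le\Phi(B(x,r))+\eta|B(x,r)|$. Extracting by Vitali a disjoint family $\{B_j\}\subset U$ covering $U$ up to measure zero and using once more $\sum_j\Phi(B_j)=\Phi(\bigsqcup_jB_j)\le\Phi(U)$, I obtain
\[
\int_U\Phi'\,dx=\sum_j\int_{B_j}\Phi'\,dy\le\sum_j\Phi(B_j)+\eta\sum_j|B_j|\le\Phi(U)+\eta|U|,
\]
and letting $\eta\to0$ yields $\int_U\Phi'\,dx\le\Phi(U)$; an exhaustion removes the restriction $U\subset\Omega'$. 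I expect the genuine difficulty to lie entirely in the double-covering regrouping of the previous paragraph — arranging the two disjoint ball families so that additivity of $\Phi$ on disjoint open subfamilies can be applied inside each $B_j$ — since every other step is a routine application of the Vitali covering and Lebesgue differentiation theorems.
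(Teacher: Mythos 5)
The paper itself offers no proof of Lemma~\ref{lem:AddFun}; it is quoted from \cite{RR55,VU04}. Your double-covering Vitali argument is the standard proof of exactly this statement, and its core --- two successive disjoint Vitali extractions, with the inner balls regrouped inside the outer ones so that additivity of $\Phi$ is only ever invoked on disjoint families of open sets --- is correct and complete for part (a).

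Two of your auxiliary justifications do not survive scrutiny as written, though both are repairable. First, in (b) you claim that $x\mapsto\Phi(B(x,r))$ is lower semicontinuous ``by continuity of $\Phi$ from below on balls''; continuity from below is \emph{not} a consequence of monotonicity plus countable additivity on disjoint open families. For example, on $\Omega=(-2,2)$ the set function $\Phi(U)=|U|+1$ if $(-1,1)\subset U$ and $\Phi(U)=|U|$ otherwise is monotone and countably additive (the connected set $(-1,1)$ can lie in at most one member of a disjoint open family), yet $\sup_{\delta>0}\Phi(B(0,1-\delta))=2<3=\Phi(B(0,1))$. The repair is to replace $\Phi(B(x,r))$ by its left regularization $h_r(x)=\sup_{\delta>0}\Phi(B(x,r-\delta))$, which really is lower semicontinuous (use $B(x,r-2\delta)\subset B(x_m,r-\delta)$ for $x_m$ near $x$) and is squeezed between $\Phi(B(x,r))$ and $\Phi(B(x,r'))$ for every $r'>r$, so that $\Phi'$ is still the a.e.\ limit of the measurable functions $h_{r_k}/|B(\cdot,r_k)|$. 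Second, in (c) you apply the Lebesgue differentiation theorem to $\Phi'$, whose local integrability is essentially the conclusion you are trying to prove. Apply the argument instead to the truncations $\min(\Phi',N)$, which are bounded and measurable; since $\min(\Phi',N)(x)\le\Phi'(x)=\lim_{r\to0}\Phi(B(x,r))/|B(x,r)|$ at a.e.\ $x$, your covering step gives $\int_U\min(\Phi',N)\,dx\le\Phi(U)+\eta|U|$, and one lets $N\to\infty$ by monotone convergence before sending $\eta\to0$.
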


The following theorem gives a characterization of weak $(p,q)$-quasiconformal mappings, $n-1<q<p<\infty$ in the terms of ring capacity inequalities.

\begin{thm}
\label{cap-pq}
Let $n-1<q<p<\infty$, then a homeomorphism $\varphi :\Omega\to \widetilde{\Omega}$ is a weak $p$-quasiconformal mapping if and only if
there exists a bounded monotone countable-additive set function
$\widetilde{\Phi}_{p,q}$ defined on open subsets of $\widetilde{\Omega}$
such that for every ring condenser 
$(F,G)\subset \widetilde{\Omega}$
the inequality
\begin{equation}
\label{ringcap}
\cp_q^{1/q}(\varphi^{-1}(F);\varphi^{-1}(G)) \leq \widetilde\Phi_{p,q}(G)^{\frac{p-q}{pq}} \cp_p^{1/p}(F;G)
\end{equation}
holds.
\end{thm}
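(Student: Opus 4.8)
The plan is to read the ring inequality \eqref{ringcap} as a statement about the composition operator $\varphi^{\ast}$ and its set function \eqref{setfunc}, keeping the exponent $\kappa$ with $1/\kappa=1/q-1/p=\tfrac{p-q}{pq}$ fixed throughout, and then to follow the architecture of the proof of Theorem~\ref{cap-pp}, with the constant $C_p(\varphi;\Omega)$ replaced by the localized factor $\widetilde\Phi_{p,q}(G)^{1/\kappa}$. The \emph{necessity} is short. If $\varphi$ is weak $(p,q)$-quasiconformal, Theorem~\ref{CompTh} gives the bounded operator $\varphi^{\ast}\colon L^1_p(\widetilde\Omega)\to L^1_q(\Omega)$ and Theorem~\ref{thmsetfunc} produces the bounded monotone countably additive set function $\widetilde\Phi_{p,q}$ of \eqref{setfunc}. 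I would take an arbitrary admissible function $f$ for the ring $(F,G)$; since $f$ has compact support in $G$ we may view $f\in L^1_p(G)\cap C_0(G)$, and $\varphi^{\ast}(f)=f\circ\varphi$ is admissible for $(\varphi^{-1}(F);\varphi^{-1}(G))$. The definition of capacity together with \eqref{setfunc} then gives
\[
\cp_q^{1/q}(\varphi^{-1}(F);\varphi^{-1}(G))\le\|\varphi^{\ast}(f)\mid L^1_q(\Omega)\|\le\widetilde\Phi_{p,q}(G)^{\frac{p-q}{pq}}\,\|f\mid L^1_p(G)\|,
\]
and taking the infimum over admissible $f$ yields \eqref{ringcap}.

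The \emph{sufficiency} is the substantial direction, carried out in two passes over the condensers already assembled for Theorem~\ref{cap-pp}. In the first pass I establish regularity. As in Theorem~\ref{cap-pp}, the inequality \eqref{ringcap} forces $\varphi^{-1}\in W^1_{1,\loc}(\widetilde\Omega)$. Testing \eqref{ringcap} on the concentric balls $F_r=\overline B(y,r)$, $G_r=B(y,2r)$ and combining \eqref{capballs} with \eqref{caplowerdiam}, the common power $r^{n/q-1}$ cancels on both sides; letting $r\to0$ and using that $\widetilde\Phi_{p,q}$ and the volume have finite derivatives a.e. (Lemma~\ref{lem:AddFun}(a)) gives, for a.e. $y$,
\[
\bigl(L(y,\varphi^{-1})\bigr)^{\frac{1}{n-1}}\le C(n,p,q)\,[\widetilde\Phi_{p,q}'(y)]^{\frac{p-q}{pq}}\,|J(y,\varphi^{-1})|^{\frac{1}{n-1}-\frac1q}.
\]
Since $q>n-1$ the exponent $\tfrac{1}{n-1}-\tfrac1q$ is positive, so $L(y,\varphi^{-1})$ is finite a.e. (Stepanov's theorem then gives differentiability a.e.) and $D\varphi^{-1}=0$ wherever $J(\cdot,\varphi^{-1})=0$, i.e. $\varphi^{-1}$ has finite distortion. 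Dividing by $|J(y,\varphi^{-1})|^{1/q'}$ with $q'=q/(q-n+1)$, so that $1/q'=1-\tfrac{n-1}{q}$, collapses the Jacobian exponent and leaves $K_{q'}(\varphi^{-1},y)\le C\,[\widetilde\Phi_{p,q}'(y)]^{(n-1)(p-q)/(pq)}$, which belongs to $L_\sigma(\widetilde\Omega)$ with $1/\sigma=1/p'-1/q'=(n-1)(p-q)/(pq)$ because $\int_{\widetilde\Omega}\widetilde\Phi_{p,q}'\le\widetilde\Phi_{p,q}(\widetilde\Omega)<\infty$ by Lemma~\ref{lem:AddFun}(c). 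Exactly as in Theorem~\ref{cap-pp}, this makes $\varphi^{-1}$ generate a bounded operator $(\varphi^{-1})^{\ast}\colon L^1_{q'}(\Omega)\to L^1_{p'}(\widetilde\Omega)$, and the composition-regularity Theorem~\ref{CompThD} applied to $\varphi^{-1}$ yields $\varphi\in W^1_{1,\loc}(\Omega)$ with finite distortion.

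In the second pass I sharpen the distortion estimate with the box condenser. At a point $y_0$ of differentiability of $\varphi^{-1}$, normalized as in \eqref{condbox}, I insert the upper bound \eqref{capupper} of $\cp_p(\hat F;\hat G)$ and the lower bound \eqref{caplower} of $\cp_q(\varphi^{-1}(\hat F);\varphi^{-1}(\hat G))$ (via \eqref{estboxlow}) into \eqref{ringcap}. The decisive new move is to replace $\widetilde\Phi_{p,q}(\hat G)$ by $\widetilde\Phi_{p,q}'(y_0)\,|\hat G|$ with $|\hat G|\sim r^n t$: for fixed $t$ the boxes $\hat G$ shrink to $y_0$ at a fixed eccentricity, so differentiation of the set function gives $\widetilde\Phi_{p,q}(\hat G)/|\hat G|\to\widetilde\Phi_{p,q}'(y_0)$ for a.e. $y_0$. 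The powers of $r$ again cancel (both sides carry $r^{n/q-1}$); letting $\varepsilon\to0$ and afterwards $t\to0$, where both sides blow up at the identical rate $t^{-(1-1/q)}$, and comparing the surviving coefficients yields, for a.e. $y_0$,
\[
\frac{|J(y_0,\varphi^{-1})|}{l(D\varphi^{-1}(y_0))^{q}}\le[\widetilde\Phi_{p,q}'(y_0)]^{\frac{p-q}{p}}.
\]
With $|D\varphi(x)|=1/l(D\varphi^{-1}(y))$ and $J(x,\varphi)=1/J(y,\varphi^{-1})$ at $y=\varphi(x)$, the weak change of variables formula \cite{H93} then gives
\[
\int_\Omega K_p(x)^{\kappa}\,dx=\int_{\widetilde\Omega}\left(\frac{|J(y,\varphi^{-1})|}{l(D\varphi^{-1}(y))^{q}}\right)^{\frac{p}{p-q}}dy\le\int_{\widetilde\Omega}\widetilde\Phi_{p,q}'(y)\,dy\le\widetilde\Phi_{p,q}(\widetilde\Omega)<\infty,
\]
so $K_{p,q}(\varphi;\Omega)\le\widetilde\Phi_{p,q}(\widetilde\Omega)^{\frac{p-q}{pq}}<\infty$ and Theorem~\ref{CompTh} completes the argument.

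I expect the boxed differentiation step to be the main obstacle: one must pass from the set value $\widetilde\Phi_{p,q}(\hat G)$ to $\widetilde\Phi_{p,q}'(y_0)\,|\hat G|$ along the anisotropic condensers $\hat G$ whose eccentricity degenerates as $t\to0$. A crude monotone comparison with the circumscribed ball would replace $|\hat G|\sim r^n t$ by $r^n$, discarding the factor $t$ and destroying the exact matching of the $t$-powers on the two sides; hence the sharp Lebesgue differentiation along these nicely shrinking but $t$-dependent boxes, carefully synchronized with the order of the limits in $\varepsilon$, $r$ and $t$, is precisely what makes the estimate close. This is the genuinely new ingredient absent from the $p=q$ situation of Theorem~\ref{cap-pp}, where no set function appears.
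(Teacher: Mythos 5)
Your proposal is correct and follows essentially the same route as the paper: necessity via the composition operator and the set function \eqref{setfunc}, and sufficiency via the ball condensers (Stepanov, finite distortion, integrability of the $q'$-distortion of $\varphi^{-1}$ against $\widetilde\Phi_{p,q}'$, then Theorems~\ref{CompTh} and~\ref{CompThD}) followed by the box condenser \eqref{condbox} with differentiation of $\widetilde\Phi_{p,q}$ along the shrinking boxes to get the pointwise estimate \eqref{pointest}. If anything, your necessity argument is slightly more careful than the paper's, which writes only the global constant $K_{p,q}(\varphi;\Omega)$ rather than the localized factor $\widetilde\Phi_{p,q}(G)^{\frac{p-q}{pq}}$ obtained by restricting admissible functions to $L^1_p(G)\cap C_0(G)$.
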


\begin{proof}
{\it Necessity.} Let $\varphi :\Omega\to \widetilde{\Omega}$ be a weak $(p,q)$-quasiconformal mappings. Then, by Theorem~\ref{CompTh}, the inequality 
$$
\|\varphi^{\ast}(f)\mid L^1_q(\Omega)\|\leq K_{p,q}(\varphi;\Omega) \|f\mid L^1_p(\widetilde{\Omega})\|
$$
holds for every function $f\in L^1_p(\widetilde{\Omega})$. Let $f$ be an admissible function for the ring condenser $R=(F;G)$, then $f\circ\varphi$ is an admissible function for the ring condenser $\varphi^{-1}(R)=(\varphi^{-1}(F);\varphi^{-1}(G))$. Hence, by the definition of the capacity, we have 
$$
\cp_q^{1/q} (\varphi^{-1}(F);\varphi^{-1}(G))\leq 
\|\varphi^{\ast}(f)\mid L^1_q(\Omega)\|\leq K_{p,q}(\varphi;\Omega) \|f\mid L^1_p(\widetilde{\Omega})\|.
$$
Since $f$ is arbitrary admissible function, we obtain
$$
\cp_q^{1/q} (\varphi^{-1}(F);\varphi^{-1}(G))\leq K_{p,q}(\varphi;\Omega) \cp_{p}^{1/p}(F;G).
$$

\vskip 0.2cm
\noindent
{\it Sufficiency.}
By Theorem~\ref{CompTh}, it is sufficient to prove that $\varphi \in W^1_{1,\loc}(\Omega)$, has finite distortion and 
$$
\int\limits_{\Omega}\left(\frac{|D\varphi(x)|^p}{|J(x,\varphi)|}\right)^{\frac{q}{p-q}} \, dx<\infty.
$$
As in proof of the previous theorem, at first we prove that $\varphi^{-1}$ satisfies the conditions of Theorem~\ref{CompTh} and generates a bounded composition operator
$$
(\varphi^{-1})^\ast : L^1_{q'}(\Omega) \to L^1_{p'}(\widetilde\Omega), \quad n-1 < p'<q'<\infty,
$$
where $p' = \frac{p}{p-(n-1)}$ and $p' = \frac{q}{q-(n-1)}$.

By \cite{U93}, it follows that the capacity inequality 
$$
\cp_q^{1/q}(\varphi^{-1}(F);\varphi^{-1}(G)) \leq \widetilde\Phi_{p,q}(G)^{\frac{p-q}{pq}} \cp_p^{1/p}(F;G)
$$
implies $\varphi^{-1}\in W^1_{1,\loc}(\widetilde\Omega)$.
Similar to the case $p=q$, for each $y \in \widetilde\Omega$, consider a ring condenser $(F_r; G_r)$, where $F_r = \{z : |y-z| \leq r\}$, $G_r = \{z : |y-z| < 2r\}$ and let small enough $r>0$ be chosen such that $(F_r; G_r) \subset \widetilde\Omega$. Then, by substituting estimates \eqref{capballs} and \eqref{caplowerdiam} in the inequality~\eqref{ringcap} we obtain
$$
(\diam(\varphi^{-1}(F_r)))^{\frac{1}{n-1}} \leq C(n,p,q) \widetilde\Phi_{p,q}(G_r)^{\frac{p-q}{pq}} r^{\frac{n-p}{p}} |\varphi^{-1}(G_r)|^{\frac{1}{n-1}-\frac{1}{q}},
$$
 or, in another form,
 $$
   \left(\frac{\diam(\varphi^{-1}(F_r))}{r}\right)^{\frac{1}{n-1}} \leq C \left(\frac{\widetilde\Phi_{p,q}(G_r)}{|G_r|}\right)^{\frac{p-q}{pq}} \left(\frac{|\varphi^{-1}(G_r)|}{|G_r|}\right)^{\frac{1}{n-1}-\frac{1}{q}}.
 $$
 Taking $r \to 0$, we obtain 
$$
(L(y, \varphi^{-1}))^{\frac{1}{n-1}} \leq C (\widetilde\Phi'_{p,q}(y))^{\frac{p-q}{pq}} |J(y, \varphi^{-1})|^{\frac{q-(n-1)}{q(n-1)}}<\infty,
$$
for almost all $y \in \widetilde\Omega$. Therefore, by Stepanov's Theorem, $\varphi^{-1}$ is differentiable a.e. in $\widetilde\Omega$.

Raising the above inequality to the power $\frac{p(n-1)}{p-(n-1)}=p'(n-1)$, we obtain
\begin{equation}\label{pointinv}
|D\varphi^{-1}(y)|^{p'} \leq C (\widetilde\Phi'_{p,q}(y))^{\frac{(p-q)(n-1)}{qp-q(n-1)}} |J(y, \varphi^{-1})|^{\frac{pq-p(n-1)}{qp -q(n-1)}} \text{ a.e. in } \widetilde\Omega,
 \end{equation}
and so $\varphi^{-1}$ is a mapping of finite distortion. 

Using $q' = \frac{q}{q-(n-1)}$, we can rewrite \eqref{pointinv} in the following form:
$$
  \left( \frac{|D\varphi^{-1}(y)|^{q'}}{|J(y, \varphi^{-1})|}\right)^{\frac{p'}{q'-p'}} \leq C \widetilde\Phi'_{p,q}(y)\,\, 
	\text{for almost all}\,\,y\in \widetilde\Omega.
$$
Integrating over $\widetilde\Omega$, we conclude that 
$$
\int\limits_{\widetilde\Omega}\left( \frac{|D\varphi^{-1}(y)|^{q'}}{|J(y, \varphi^{-1})|}\right)^{\frac{p'}{q'-p'}}~dy
\leq C \int\limits_{\widetilde\Omega} \widetilde\Phi'_{p,q}(y)~dy= C \widetilde\Phi(\widetilde\Omega)<\infty
$$
and $\varphi^{-1}$ generates a bounded composition operator
 $$
   (\varphi^{-1})^\ast : L^1_{q'}(\Omega) \to L^1_{p'}(\widetilde\Omega), \quad n-1 < p'<q'<\infty
 $$
 
Hence, by Theorem \ref{CompThD}, the mapping $\varphi$ generates a bounded composition operator
$$
\varphi^\ast : L^1_{p''}(\widetilde\Omega) \to L^1_{q''}(\Omega), \quad 1 < p''<q''<\infty.
$$
So $\varphi \in W^1_{1,\loc}(\Omega)$ and has finite distortion.

Now we prove that
 $$
   \int_\Omega K_p(x)^{\frac{pq}{p-q}} \, dx \leq \widetilde\Phi_{p,q}(\widetilde\Omega).
 $$
By the change of variables formula \cite{H93}
$$
  \int\limits_{\Omega}\left(\frac{|D\varphi(x)|^p}{|J(x,\varphi)|}\right)^{\frac{q}{p-q}} \, dx \leq \int\limits_{\widetilde\Omega} \left(\frac{|J(y,\varphi^{-1})|}{(l(D\varphi^{-1}(y)))^q}\right)^{\frac{p}{p-q}} \, dy
$$
 and so it is sufficient to prove the pointwise estimate
 \begin{equation}\label{pointest}
   \left(\frac{|J(y,\varphi^{-1})|}{(l(D\varphi^{-1}(y)))^q}\right)^{\frac{p}{p-q}} \leq \widetilde\Phi'_{p,q}(y)
 \end{equation}
 for almost all $y \in \widetilde\Omega$.
 
Consider an arbitrary point $y_0 \in \widetilde\Omega$, where $\varphi^{-1}$ is differentiable and $\widetilde\Phi'_{p,q}(y_0)<\infty$ and denote as $\lambda_1 \geq \lambda_2 \geq \dots \geq \lambda_n > 0$ semiaxes of an ellipsoid, which is an image of the unit ball in $\widetilde\Omega$ under the linear mapping $D\varphi^{-1}(y_0)$. Then the inequality \eqref{pointest} is equal to
$$
\frac{\prod\limits_{i=1}\limits^{n}\lambda_i^{\frac{1}{q}}}{\lambda_n} \leq (\widetilde\Phi'_{p,q}(y_0))^{\frac{p-q}{pq}}.
$$

Without loss of generality, we can assume that $y_0 = 0$, $\varphi^{-1}(0) = 0$ and 
$$
(D\varphi^{-1}(0))(e_i) = \lambda_i e_i,\,\, i = 1, \dots, n,
$$ 
Consider the condenser $\hat{R}=(\hat{F};\hat{G})$ defined by \eqref{condbox}. Then, by the estimate \eqref{capupper}
 $$
   \cp_p^{1/p}(F;G) \leq \frac{|G \setminus F|^{\frac{1}{p}}}{\dist(F, \partial G)} \leq \frac{2^{\frac{n}{p}}\prod\limits_{i=1}\limits^{n-1} (1+t\lambda_i)^{\frac{1}{p}}}{\lambda_n^{1-\frac{1}{p}}t^{1-\frac{1}{p}}r^{1-\frac{n}{p}}}. 
 $$
The estimate \eqref{caplower} together with \eqref{estboxlow} gives us
 $$
   \cp_p^{1/p}(\varphi^{-1}(\hat{F});\varphi^{-1}(\hat{G})) \geq \frac{2^{\frac{n}{q}}r^{\frac{n}{q}-1}\prod\limits_{i=1}\limits^{n-1}(\lambda_i-\varepsilon)}{(t\lambda^2_n+\varepsilon)^{1-\frac{1}{q}}\prod\limits_{i=1}\limits^{n-1}(\lambda_i+t\lambda^2_i+\varepsilon)^{1-\frac{1}{q}}}.
 $$
Substituting these  estimates in the inequality \eqref{ringcap} we have
 $$
 \frac{2^{\frac{n}{q}}r^{\frac{n}{q}-1}\prod\limits_{i=1}\limits^{n-1}(\lambda_i-\varepsilon)}{(t\lambda^2_n+\varepsilon)^{1-\frac{1}{q}}\prod\limits_{i=1}\limits^{n-1}(\lambda_i+t\lambda^2_i+\varepsilon)^{1-\frac{1}{q}}}
 \leq C \widetilde\Phi_{p,q}(G)^{\frac{p-q}{pq}} \frac{2^{\frac{n}{p}}\prod\limits_{i=1}\limits^{n-1} (1+t\lambda_i)^{\frac{1}{p}}}{\lambda_n^{1-\frac{1}{p}}t^{1-\frac{1}{p}}r^{1-\frac{n}{p}}}.
 $$
 Let $\varepsilon \to 0$, then
 $$
 \frac{2^{\frac{n}{q}}r^{\frac{n}{q}-1}\prod\limits_{i=1}\limits^{n-1}\lambda_i}{(t\lambda^2_n)^{1-\frac{1}{q}}\prod\limits_{i=1}\limits^{n-1}(\lambda_i+t\lambda^2_i)^{1-\frac{1}{q}}}
 \leq C \widetilde\Phi_{p,q}(G)^{\frac{p-q}{pq}} \frac{2^{\frac{n}{p}}\prod\limits_{i=1}\limits^{n-1} (1+t\lambda_i)^{\frac{1}{p}}}{\lambda_n^{1-\frac{1}{p}}t^{1-\frac{1}{p}}r^{1-\frac{n}{p}}}
 $$
 Further, dividing both sides by $|G| = 2^n\lambda_n r^n t\prod\limits_{i=1}\limits^{n-1}(1+t\lambda_i)$ and tending $r \to 0$, we obtain
 $$
   \frac{\prod\limits_{i=1}\limits^{n-1}\lambda_i}{(t\lambda^2_n)^{1-\frac{1}{q}}\prod\limits_{i=1}\limits^{n-1}(\lambda_i+t\lambda^2_i)^{1-\frac{1}{q}}}
   \leq C (\widetilde\Phi'_{p,q}(y_0))^{\frac{p-q}{pq}} \frac{\lambda^{\frac{p-q}{pq}}_n t^{\frac{p-q}{pq}}\prod\limits_{i=1}\limits^{n-1} (1+t\lambda_i)^{\frac{p-q}{pq}}}{\lambda_n^{1-\frac{1}{p}}t^{1-\frac{1}{p}}}.
 $$
 Now multiplying both sides by $t^{1-\frac{1}{q}}$ and tending $t$ to $0$ we obtain
 $$
   \frac{\prod\limits_{i=1}\limits^{n-1}\lambda_i}{(\lambda^2_n)^{1-\frac{1}{q}}\prod\limits_{i=1}\limits^{n-1}\lambda_i^{1-\frac{1}{q}}}
   \leq C (\widetilde\Phi'_{p,q}(y_0))^{\frac{p-q}{pq}} \frac{\lambda^{\frac{p-q}{pq}}_n}{\lambda_n^{1-\frac{1}{p}}}.
 $$
 Finally we have
 $$
  \frac{\prod\limits_{i=1}\limits^{n}\lambda_i^{\frac{1}{q}}}{\lambda_n} \leq (\widetilde\Phi'_{p,q}(y_0))^{\frac{p-q}{pq}}.
 $$
 and the theorem is proved.
\end{proof}

\section{Capacitory characterizations of set functions}

In this section we give capacitory characterizations of the set function. Let $1<q<p<\infty$ and $\varphi:\Omega\to\widetilde{\Omega}$ be a homeomorphism. We define the set function on open bounded subsets $\widetilde A\subset\widetilde{\Omega}$ by the rule
\begin{equation}
\label{capfun}
\widetilde{\Psi}_{p,q}(\widetilde A) = \sup\limits_{(F; G) \subset \widetilde A} \left(\frac{\cp_{q}^{\frac{1}{q}}(\varphi^{-1}(F); \varphi^{-1}(G))}{\cp_{p}^{\frac{1}{p}}(F; G)}\right)^{\frac{pq}{p-q}},
\end{equation}
where the supremum is taken over all ring condensers $(F; G) \subset \widetilde A$ such that $\cp_{p}(F; G) \ne 0$.

Let us consider the variation of the set function $\widetilde{\Psi}_{p,q}$. Recall that the variation of a set function is a quasiadditive set function
$$
V(\widetilde{\Psi}_{p,q}, \widetilde\Omega) = \sup\limits_{\{\widetilde A_k\}} \sum\limits_{k=1}\limits^{\infty} \widetilde{\Psi}_{p,q}(\widetilde A_k),
$$
where the supremum is taken over all families of disjoint sets $\{\widetilde A_k\}_{k\in \mathbb{N}}$, $\widetilde A_k \subset \widetilde\Omega$.

\begin{prop}
\label{prop5}
Let $n-1 < q < p < \infty$, then the set function $\widetilde{\Psi}_{p,q}$ is a positive and non-decreasing function. 
\end{prop}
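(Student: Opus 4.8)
The plan is to treat the two assertions separately, observing first the single fact that makes both of them essentially formal: the capacity of a ring condenser $(F;G)$ is an \emph{intrinsic} quantity of the pair $(F;G)$, independent of any ambient open set. Indeed, in the defining infimum
$$
\cp_p(F;G)=\inf\|f\mid L^1_p(\Omega)\|^p
$$
every admissible $f$ has compact support in $G$, so $\nabla f$ is supported in $G$ and $\|f\mid L^1_p(\Omega)\|=\|\nabla f\mid L_p(G)\|$; both the admissible class and the seminorm depend only on $(F;G)$. Consequently the ratio
$$
\left(\frac{\cp_{q}^{\frac{1}{q}}(\varphi^{-1}(F); \varphi^{-1}(G))}{\cp_{p}^{\frac{1}{p}}(F; G)}\right)^{\frac{pq}{p-q}}
$$
attached to a condenser in \eqref{capfun} is the same regardless of the set $\widetilde A$ in which $(F;G)$ is regarded; the role of $\widetilde A$ is solely to restrict the family of competing condensers.

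For positivity, I note that $\widetilde{\Psi}_{p,q}(\widetilde A)\geq 0$ holds trivially, being a supremum of nonnegative quantities, so it remains to exhibit, for a nonempty open $\widetilde A$, a single admissible condenser yielding a strictly positive value. I would take concentric balls $F=\{y:|y-y_0|\le r_F\}$, $G=\{y:|y-y_0|<r_G\}$ with $\overline G\subset\widetilde A$; then $\cp_p(F;G)\in(0,\infty)$ by \eqref{capballs}. Since $\varphi$ is a homeomorphism, $\varphi^{-1}(F)$ is a nondegenerate continuum and $\varphi^{-1}(\overline G)$ is compact, so $\diam(\varphi^{-1}(F))>0$ and $|\varphi^{-1}(G)|<\infty$; the lower estimate \eqref{caplowerdiam} then forces $\cp_q(\varphi^{-1}(F);\varphi^{-1}(G))>0$. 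Hence the corresponding ratio is strictly positive and, raised to the positive power $\frac{pq}{p-q}$, gives $\widetilde{\Psi}_{p,q}(\widetilde A)>0$.

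For monotonicity, let $\widetilde A_1\subset\widetilde A_2\subset\widetilde\Omega$ be open. Every ring condenser $(F;G)\subset\widetilde A_1$ is also a ring condenser $(F;G)\subset\widetilde A_2$, and by the intrinsic character recorded above its associated ratio is unchanged. Thus the supremum defining $\widetilde{\Psi}_{p,q}(\widetilde A_2)$ is taken over a family of condensers containing that defining $\widetilde{\Psi}_{p,q}(\widetilde A_1)$, whence $\widetilde{\Psi}_{p,q}(\widetilde A_1)\le\widetilde{\Psi}_{p,q}(\widetilde A_2)$.

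The argument is almost entirely bookkeeping, so I expect no genuine obstacle; the only substantive checks are that $(\varphi^{-1}(F);\varphi^{-1}(G))$ is again a legitimate ring condenser, which is immediate since a homeomorphism carries connected compacta to connected compacta and connected open sets to connected open sets, and that $\varphi^{-1}(F)$ is a nondegenerate continuum with $|\varphi^{-1}(G)|<\infty$, so that \eqref{caplowerdiam} indeed produces strict positivity rather than a vacuous lower bound.
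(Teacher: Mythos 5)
Your proof is correct and follows essentially the same route as the paper: monotonicity by inclusion of the competing families of condensers, and positivity by exhibiting a concentric-ball condenser with finite positive $p$-capacity whose preimage, being a nondegenerate continuum, has positive $q$-capacity since $q>n-1$. The only cosmetic difference is that you derive the positivity of $\cp_q(\varphi^{-1}(F);\varphi^{-1}(G))$ quantitatively from the estimate \eqref{caplowerdiam}, while the paper cites the general fact that continua have positive $q$-capacity for $q>n-1$; both hinge on the same restriction on $q$.
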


\begin{proof}
Let $\widetilde{A}_1\subset \widetilde{A}_2\subset \widetilde{\Omega}$. Then 
\begin{multline*}
\widetilde{\Psi}_{p,q}(\widetilde {A}_1) = \sup\limits_{(F_1; G_1) \subset \widetilde {A}_1} \left(\frac{\cp_{q}^{\frac{1}{q}}(\varphi^{-1}(F_1); \varphi^{-1}(G_1))}{\cp_{p}^{\frac{1}{p}}(F_1; G_1)}\right)^{\frac{pq}{p-q}}\\
=\sup\limits_{(F_1; G_1) \subset \widetilde {A}_2} \left(\frac{\cp_{q}^{\frac{1}{q}}(\varphi^{-1}(F_1); \varphi^{-1}(G_1))}{\cp_{p}^{\frac{1}{p}}(F_1; G_1)}\right)^{\frac{pq}{p-q}} \\
\leq \sup\limits_{(F_2; G_2) \subset \widetilde {A}_2} \left(\frac{\cp_{q}^{\frac{1}{q}}(\varphi^{-1}(F_2); \varphi^{-1}(G_2))}{\cp_{p}^{\frac{1}{p}}(F_2; G_2)}\right)^{\frac{pq}{p-q}} = \widetilde{\Psi}_{p,q}(\widetilde {A}_2  ),
\end{multline*}
because any condenser $(F_1; G_1) \subset \widetilde {A}_2$ coincides with some condenser $(F_2; G_2) \subset \widetilde {A}_2$.

Now we prove positivity of the set function $\widetilde{\Psi}_{p,q}(\widetilde A)$. Fix an arbitrary point $x \in \widetilde A$ and choose $r > 0$ such that ring condenser $(\overline B(x, r); B(x, 2r))$ is a subset of $\widetilde A$. Then, by \cite{GResh},
$$
\cp_p(\overline B(x, r); B(x, 2r))\geq c_1>0.
$$
The preimage $\varphi^{-1}(\overline B(x, r))$ is a continuum set. Then because in the case $q>n-1$ the capacity of a continuum set is positive \cite{GResh}, we have
$$
\cp_p(\varphi^{-1}(\overline B(x, r)); \varphi^{-1}(B(x, 2r)))\geq c_2>0. 
$$
Hence $\widetilde{\Psi}_{p,q}(\widetilde A)>0$ on arbitrary open sets $\widetilde A \subset \mathbb{R}^n$.

\end{proof}

\begin{rem}
In the case $n=2$ Proposition~\ref{prop5} is correct for all $1\leq q<p<\infty$. 
\end{rem}

In the following assertion we give connections between set functions $\widetilde{\Phi}_{p,q}$ defined by the norms of composition operators ans set functions $\widetilde{\Psi}_{p,q}$ defined by capacity. 

\begin{thm}\label{setcap}
Fix a constant $M<\infty$ and let $\varphi: \Omega \to \widetilde\Omega$ be a homeomorphism. Then the following conditions are equivalent:
\begin{enumerate}
\item There exists the bounded monotone countable-quasiadditive set function
$\widetilde{\Phi}_{p,q}$ defined on open subsets of $\widetilde{\Omega}$
such that for every ring condenser 
$(F; \widetilde A)\subset \widetilde{\Omega}$
the inequality
$$
\cp_{q}^{1/q}(\varphi^{-1}(F);\varphi^{-1}(\widetilde A))
\leq \widetilde{\Phi}_{p,q}(\widetilde A)^{\frac{p-q}{pq}}
\cp_{p}^{1/p}(F; \widetilde A)
$$
holds and $ \widetilde{\Phi}_{p,q}(\widetilde\Omega) \leq M$;
\item The set function $\widetilde{\Psi}_{p,q}$ defined by (\ref{capfun}) has the bounded variation $V(\widetilde{\Psi}_{p,q}, \widetilde\Omega) \leq M$;
\item For every family of disjoint ring condensers $\{(F_k; \widetilde A_k)\}_{k\in \mathbb{N}}$ in $\widetilde\Omega$ with $\cp_{p}^{\frac{1}{p}}(F_k; \widetilde A_k) \ne 0$, the following inequality
\begin{equation}\label{cap_sum}
\sum\limits_{k=1}\limits^{\infty} \left( \frac{\cp_{q}^{\frac{1}{q}}(\varphi^{-1}(F_k); \varphi^{-1}(\widetilde A_k))}{\cp_{p}^{\frac{1}{p}}(F_k; \widetilde A_k)} \right)^{\frac{pq}{p-q}} \leq M
\end{equation}
holds.
\end{enumerate}
\end{thm}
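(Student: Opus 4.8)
The plan is to establish the three equivalences through the cycle (1) $\Rightarrow$ (3) $\Rightarrow$ (2) $\Rightarrow$ (1). The organizing object is the capacitary set function $\widetilde{\Psi}_{p,q}$ from \eqref{capfun}, which by Proposition~\ref{prop5} is positive and non-decreasing. The structural fact I would use repeatedly is that the outer shell $G$ of a ring condenser $(F;G)$ is connected, so any ring condenser contained in a disjoint union of open sets lies entirely inside a single component; consequently $\widetilde{\Psi}_{p,q}\left(\bigsqcup_j V_j\right)=\sup_j\widetilde{\Psi}_{p,q}(V_j)$ for disjoint open sets $V_j$.

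First I would prove (1) $\Rightarrow$ (3). Given a disjoint family of ring condensers $\{(F_k;\widetilde A_k)\}$, the inequality in condition (1) applied to each $(F_k;\widetilde A_k)$ and raised to the power $\frac{pq}{p-q}$ gives $\bigl(\cp_q^{1/q}(\varphi^{-1}(F_k);\varphi^{-1}(\widetilde A_k))/\cp_p^{1/p}(F_k;\widetilde A_k)\bigr)^{\frac{pq}{p-q}}\leq\widetilde{\Phi}_{p,q}(\widetilde A_k)$. Summing over $k$ and using that for the monotone countably additive set function $\widetilde{\Phi}_{p,q}$ one has $\sum_k\widetilde{\Phi}_{p,q}(\widetilde A_k)\leq\widetilde{\Phi}_{p,q}(\widetilde\Omega)\leq M$ yields \eqref{cap_sum}. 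For (3) $\Rightarrow$ (2) I would fix an arbitrary disjoint family of open sets $\{\widetilde A_k\}$ and, for each $k$, select by the definition of the supremum a ring condenser $(F_k;G_k)\subset\widetilde A_k$ whose quotient is within $\varepsilon 2^{-k}$ of $\widetilde{\Psi}_{p,q}(\widetilde A_k)$. Since the shells $G_k\subset\widetilde A_k$ are disjoint, condition (3) bounds the sum of these quotients by $M$; letting $\varepsilon\to0$ on finite truncations and then taking the supremum over families yields $V(\widetilde{\Psi}_{p,q},\widetilde\Omega)\leq M$.

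The main step is (2) $\Rightarrow$ (1), where I would set $\widetilde{\Phi}_{p,q}(\widetilde A):=V(\widetilde{\Psi}_{p,q},\widetilde A)$, the variation restricted to open subsets $\widetilde A\subset\widetilde\Omega$. This function is monotone, and $\widetilde{\Phi}_{p,q}(\widetilde\Omega)\leq M$ by (2). The capacity inequality is then immediate: a ring condenser $(F;\widetilde A)$ is itself admissible in the supremum defining $\widetilde{\Psi}_{p,q}(\widetilde A)$, so its quotient raised to $\frac{pq}{p-q}$ is at most $\widetilde{\Psi}_{p,q}(\widetilde A)\leq V(\widetilde{\Psi}_{p,q},\widetilde A)=\widetilde{\Phi}_{p,q}(\widetilde A)$, which rearranges into the inequality of condition (1).

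The hard part is verifying that this variation is countably additive, since $\widetilde{\Psi}_{p,q}$ is not itself additive: on a disjoint union it is a supremum rather than a sum. The argument I would give refines any competing disjoint family $\{U\}$ inside $\bigsqcup_k\widetilde A_k$ by replacing each $U$ with the pieces $\{U\cap\widetilde A_k\}_k$; the connectivity identity $\widetilde{\Psi}_{p,q}(U)=\sup_k\widetilde{\Psi}_{p,q}(U\cap\widetilde A_k)\leq\sum_k\widetilde{\Psi}_{p,q}(U\cap\widetilde A_k)$ converts the supremum into a genuine sum and delivers $V(\widetilde{\Psi}_{p,q},\bigsqcup_k\widetilde A_k)\leq\sum_k V(\widetilde{\Psi}_{p,q},\widetilde A_k)$, while the reverse inequality follows from monotonicity and finite additivity. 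Countable additivity, monotonicity and boundedness are then exactly the properties needed to certify $\widetilde{\Phi}_{p,q}$ as the set function demanded in condition (1).
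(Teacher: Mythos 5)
Your proof follows essentially the same route as the paper: everything is organized around $\widetilde{\Psi}_{p,q}$ and its variation, the implication (3) $\Rightarrow$ (2) uses the same $\varepsilon 2^{-k}$ near-optimal selection of condensers, and (2) $\Rightarrow$ (1) takes $\widetilde{\Phi}_{p,q}=V(\widetilde{\Psi}_{p,q},\cdot)$ exactly as the paper does. The two cosmetic differences are that you close a cycle (1) $\Rightarrow$ (3) $\Rightarrow$ (2) $\Rightarrow$ (1) where the paper proves (2) $\Leftrightarrow$ (3) and (1) $\Leftrightarrow$ (2) separately, and that you actually justify the (quasi)additivity of the variation via the connectedness of the outer shell $G$ (the paper merely asserts that $V(\widetilde{\Psi}_{p,q})$ is quasiadditive). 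Your connectivity argument is correct and is a welcome addition; note also that condition (1) only asks for countable quasiadditivity, so the one-sided inequality $V(\widetilde{\Psi}_{p,q},\bigcup_k\widetilde A_k)\geq\sum_k V(\widetilde{\Psi}_{p,q},\widetilde A_k)$, which is immediate from the definition of the variation, would already suffice.

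There is one genuine omission in your (2) $\Rightarrow$ (1). The supremum in (\ref{capfun}) is taken only over ring condensers with $\cp_p(F;G)\neq 0$, so your argument that ``$(F;\widetilde A)$ is itself admissible in the supremum'' establishes the capacity inequality of condition (1) only for condensers of nonzero $p$-capacity, whereas condition (1) demands it for \emph{every} ring condenser. When $\cp_p(F;\widetilde A)=0$ the inequality asserts the nontrivial fact that $\cp_q(\varphi^{-1}(F);\varphi^{-1}(\widetilde A))=0$, and this must be proved separately. The paper handles it by exhausting $F$ by the compact neighborhoods $F_k=\{x:\dist(x,F)\leq 1/k\}$, which have positive capacity, applying the already-established inequality to each $(F_k;\widetilde A)$, and passing to the limit using the continuity of capacity on decreasing sequences of compacta. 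You should add this limiting step (or at least observe that under the standing hypothesis $q>n-1$ the only zero-capacity connected compacta are singletons, whose preimages under the homeomorphism are again singletons, and argue that case directly).
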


\begin{proof}
On the first step we prove the equivalence of the second and the third conditions of the theorem. Let $V(\widetilde{\Psi}_{p,q}, \widetilde\Omega) \leq M$ then, by the  definitions of set functions $\widetilde{\Psi}$ and $V(\widetilde{\Psi}_{p,q})$, we have
$$
\sup\limits_{\{\widetilde A_k\}} \sum\limits_{k=1}\limits^{\infty} \sup\limits_{(F_k; G_k) \subset \widetilde A_k} \left( \frac{\cp_{q}^{\frac{1}{q}}(\varphi^{-1}(F_k); \varphi^{-1}(G_k))}{\cp_{p}^{\frac{1}{p}}(F_k; G_k)} \right)^{\frac{pq}{p-q}} \leq M,
$$
where the first supremum is taken over all families of disjoint sets $\widetilde A_k$, and the second is taken over all ring condensers in $\widetilde A_k$ with non-zero capacity. Hence, for an arbitrary family $\{\widetilde A_k\}_{k \in \mathbb N}$ and condensers $(F_k; \widetilde A_k)$, the sum \eqref{cap_sum} is bounded by the same constant $M$.

Now we assume that the condition (3) is hold. Then for an arbitrary family of disjoint sets $\{(F_k; \widetilde A_k)\}_{k \in \mathbb N}$ in $\widetilde\Omega$ and an arbitrary number $\varepsilon > 0$, we choose ring condensers $(F_k; G_k)$ such that
$$
\widetilde{\Psi}_{p,q}(\widetilde A_k) < \left( \frac{\cp_{q}^{\frac{1}{q}}(\varphi^{-1}(F_k); \varphi^{-1}(G_k))}{\cp_{p}^{\frac{1}{p}}(F_k; G_k)} \right)^{\frac{pq}{p-q}} + \frac{\varepsilon}{2^k}.
$$
Then we obtain $\sum\limits_{k=1}\limits^{\infty} \widetilde{\Psi}_{p,q}(\widetilde A_k) < M + \varepsilon$. So, because $\varepsilon > 0$ is an arbitrary number we have $V(\widetilde{\Psi}_{p,q}, \widetilde\Omega) \leq M$ and hence the condition~(2) is hold.

Next we prove that the conditions (1) and (2) are equivalent. Let the condition~(1) holds, then by the capacitary inequality 
$$
\cp_{q}^{1/q}(\varphi^{-1}(F);\varphi^{-1}(\widetilde A))
\leq \widetilde{\Phi}_{p,q}(\widetilde A)^{\frac{p-q}{pq}}
\cp_{p}^{1/p}(F; \widetilde A)
$$
we have $\widetilde{\Psi}_{p,q}(\widetilde A) \leq \sup \widetilde{\Phi}_{p,q}(G)$, where the supremum is taken over all $G \subset \widetilde A$. Hence, by monotonicity of the set function $\widetilde{\Phi}_{p,q}$ we obtain $\widetilde{\Psi}_{p,q}(\widetilde A) \leq \widetilde{\Phi}_{p,q}(\widetilde A)$ for every $\widetilde A \subset \widetilde{\Omega}$. By the definition of the variation
$$
V(\widetilde{\Psi}_{p,q}, \widetilde\Omega) = \sup\limits_{\{\widetilde A_k\}} \sum\limits_{k=1}\limits^{\infty} \widetilde{\Psi}_{p,q}(\widetilde A_k) \leq \sup\limits_{\{\widetilde A_k\}} \sum\limits_{k=1}\limits^{\infty} \widetilde{\Phi}_{p,q}(\widetilde A_k).
$$
The set function $\widetilde{\Phi}_{p,q}$ is countable-quasiadditive and monotone, so it implies 
$$
V(\widetilde{\Psi}_{p,q}, \widetilde\Omega) \leq \sup \widetilde{\Phi}_{p,q}(\bigcup\limits_{k=1}\limits^{\infty} \widetilde A_k) \leq \widetilde{\Phi}_{p,q}(\widetilde\Omega)
$$
and the condition~(2) holds. 

Suppose that the condition~(2) holds, then by the definition of set functions $\widetilde{\Psi}_{p,q}$ and $V(\widetilde{\Psi}_{p,q})$ we obtain that the inequality
$$
\cp_{q}^{1/q}(\varphi^{-1}(F);\varphi^{-1}(\widetilde A))
\leq V(\widetilde{\Psi}_{p,q}, \widetilde A)
\cp_{p}^{1/p}(F; \widetilde A).
$$
holds for any condenser $(F, \widetilde A)$ with $\cp_p(F, \widetilde A) \ne 0$. The variation $V(\widetilde{\Psi}_{p,q})$ is a quasiadditive set function. 

Now we prove that the above inequality in the case $\cp_p(F; \widetilde A) = 0$ also holds also. Consider an arbitrary condenser $(F; \widetilde A)$ with $\cp_p(F; \widetilde A) = 0$. There exists a decreasing sequence of compact sets $\{F_k\}_{k=1}^\infty$, $F_k \subset \widetilde A$, $F = \bigcap_{k=1}^\infty F_k$, with $\cp_p(F_k; \widetilde A) > 0$ (for example, $F_k = \{x: \dist(x, F) \leq 1/k\}$). Then
$$
\cp_{q}^{1/q}(\varphi^{-1}(F_k);\varphi^{-1}(\widetilde A))
\leq V(\widetilde{\Psi}_{p,q}, \widetilde A)
\cp_{p}^{1/p}(F_k; \widetilde A).
$$

By the continuity of the capacity we can pass to the limit and obtain the above inequality for condensers with zero capacity.
\end{proof}

Note, that the similar statement was proven in the particular case $p=n$ in \cite{K86}.

\section{Capacitary metrics}

In this section we introduce a capacitary metric of the hyperbolic type and prove, that $(p,q)$-quasiconformal mappings are Lipschitz mappings in correspondence capacitary metrics.

Let $n-1<p\leq n$ ($1\leq p\leq 2$ if $n=2$) and let a domain $\Omega\subset\mathbb R^n$ have a boundary of non-zero $p$-capacity.

Let us define a capacitary metric 
$$
d_p(x,y) = \inf\limits_{\gamma} \cp^{\frac{1}{p}}_p(\gamma; \Omega),
$$
where the infimum is taken over all continuous curves $\gamma$, joining points $x$ and $y$ in $\Omega$. 
In the case $p=n$ metrics of such type was considered in \cite{FMV91} in a connection with quasiconformal mappings.

\begin{thm} Let $n-1<p\leq n$ ($1\leq p\leq 2$ if $n=2$) and $\cp_p(\partial\Omega)>0$ then 
$$
d_p(x,y) = \inf\limits_{\gamma} \cp^{\frac{1}{p}}_p(\gamma; \Omega)
$$ is a metric.
\end{thm}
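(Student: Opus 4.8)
The plan is to verify the three metric axioms for $d_p$: non-negativity with $d_p(x,y)=0 \iff x=y$, symmetry, and the triangle inequality. Symmetry is immediate since the class of curves joining $x$ to $y$ coincides with the class joining $y$ to $x$, and the capacity $\cp_p(\gamma;\Omega)$ depends only on the set $\gamma$, not on its orientation. Non-negativity is built into the definition. So the genuine content lies in the separation axiom ($d_p(x,y)>0$ when $x\ne y$) and in the triangle inequality.

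\emph{Separation.} First I would treat $d_p(x,x)=0$: a degenerate curve reduces to a point $\{x\}$, and since $n-1<p\le n$ the single point has zero $p$-capacity (a point has positive capacity only when $p>n$), so $d_p(x,x)=0$. For the converse, suppose $x\ne y$ and let $\gamma$ be any curve joining them. Then $\gamma$ is a continuum containing both $x$ and $y$, so $\diam(\gamma)\ge |x-y|>0$. I would invoke the lower capacity estimate \eqref{caplowerdiam}, which in the range $q>n-1$ (here applied with exponent $p$, $n-1<p\le n$) gives
$$
\cp_p^{1/p}(\gamma;\Omega) \geq C(n,p)\frac{(\diam\gamma)^{\frac{1}{n-1}}}{|\Omega|^{\frac{1}{n-1}-\frac{1}{p}}} \geq C(n,p)\frac{|x-y|^{\frac{1}{n-1}}}{|\Omega|^{\frac{1}{n-1}-\frac{1}{p}}}>0,
$$
where I must assume $\Omega$ is bounded (or otherwise localize using the hypothesis $\cp_p(\partial\Omega)>0$ to keep the admissible functions compactly supported and the denominator finite). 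Taking the infimum over $\gamma$ preserves the strictly positive bound, so $d_p(x,y)>0$. This is the step where the hypotheses $n-1<p$ and $\cp_p(\partial\Omega)>0$ are essential, since for $p\le n-1$ the estimate \eqref{caplowerdiam} fails and continua may have zero capacity.

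\emph{Triangle inequality.} This is the main obstacle, because $p$-capacity is subadditive but not additive, and the naive concatenation of a curve $\gamma_1$ from $x$ to $z$ with a curve $\gamma_2$ from $z$ to $y$ yields a curve $\gamma$ with $\cp_p(\gamma;\Omega)\le \cp_p(\gamma_1\cup\gamma_2;\Omega)$, and I need to compare the right-hand side to $\cp_p^{1/p}(\gamma_1)+\cp_p^{1/p}(\gamma_2)$. The key is the \emph{subadditivity of the $p$-capacity in the form of the $L_p$-norm}: if $f_1,f_2$ are admissible for $(\gamma_1;\Omega)$ and $(\gamma_2;\Omega)$ respectively, then $f=\max\{f_1,f_2\}$ is admissible for $(\gamma_1\cup\gamma_2;\Omega)$, and one has the pointwise bound $|\nabla f|\le |\nabla f_1|+|\nabla f_2|$ a.e.\ (since $\nabla\max\{f_1,f_2\}$ equals $\nabla f_1$ or $\nabla f_2$ a.e.), whence by the triangle inequality in $L_p(\Omega)$
$$
\cp_p^{1/p}(\gamma_1\cup\gamma_2;\Omega)\le \|\nabla f\mid L_p(\Omega)\| \le \|\nabla f_1\mid L_p(\Omega)\|+\|\nabla f_2\mid L_p(\Omega)\|.
$$
Passing to the infimum over admissible $f_1$ and $f_2$ separately gives $\cp_p^{1/p}(\gamma_1\cup\gamma_2;\Omega)\le \cp_p^{1/p}(\gamma_1;\Omega)+\cp_p^{1/p}(\gamma_2;\Omega)$. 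Combining with the monotonicity $\cp_p(\gamma)\le\cp_p(\gamma_1\cup\gamma_2)$ and then taking infima over $\gamma_1,\gamma_2$ yields
$$
d_p(x,y)\le \cp_p^{1/p}(\gamma_1\cup\gamma_2;\Omega)\le d_p(x,z)+d_p(z,y),
$$
which is the triangle inequality. The one technical point to check is that $f=\max\{f_1,f_2\}$ genuinely lies in $L_p^1(\Omega)$ with compact support in $\Omega$ and that the pointwise gradient bound holds; this is standard lattice behavior of Sobolev functions, so I would state it and move on rather than grinding through the a.e.\ identification of $\nabla\max$.
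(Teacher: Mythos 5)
Your proposal is correct and follows essentially the same route as the paper: the separation axiom is obtained from the same diameter-based lower capacity bound for continua (your use of \eqref{caplowerdiam} is equivalent, after rearranging exponents, to the estimate from Kruglikov's paper that the authors invoke), and the triangle inequality comes from the $L_p$-triangle inequality for the seminorms of admissible functions, for which you merely spell out the $\max\{f_1,f_2\}$ construction that the paper leaves implicit. Your remark that $\Omega$ must be bounded (or otherwise controlled) for the lower estimate to be nontrivial is a fair observation that applies equally to the paper's own argument.
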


\begin{proof}

The restriction on $p$ arises because for $p>n$ only empty set has zero capacity and for $p \leq n-1$ even a continuum has zero capacity. If boundary $\partial \Omega$ has zero capacity, then, by the definition of $\cp^{\frac{1}{p}}_p(\gamma; \Omega)$ will be equal to zero for any continuous curve $\gamma$.

\noindent
1. {\bf Identity}: 

If $x = y$ then the infimum is reached on one-point set, which has capacity zero, hence, $d_p(x,y)=0$. Now, let $d_p(x,y)=0$. 
Suppose that $|x-y|>0$. Then by the capacity estimate \cite{K86} for any continuous curve $\gamma\subset\Omega$ which joint points $x$ and $y$ we have
$$
\cp_p^{n-1}(\gamma;\Omega)\geq c(n,p)\frac{|x-y|^p}{|\Omega|^{p-n+1}}>0.
$$
Contradiction.

\noindent
2. {\bf Symmetry}: Follows from the definition $d_p(x,y)$:
$$
d_p(x,y) = \inf\limits_{\gamma} \cp^{\frac{1}{p}}_p(\gamma; \Omega)=d_p(y,x)
$$

\noindent
2. {\bf Triangle inequality}: Follows from the triangle inequality for $L^1_p$-seminorms in the definition of the capacity. 
\end{proof}

By using Theorem~\ref{cap-pp} and Theorem~\ref{cap-pq} we obtain the following statement.
\begin{thm}
Let $\varphi: \Omega \to \widetilde\Omega$ be a $(p,q)$-quasiconformal mapping, $n-1< q \leq p \leq n$ ($1\leq q\leq p\leq 2$ if $n=2$). Then, for any two points $x,y \in \widetilde\Omega$ the following inequality 
$$
d_q(\varphi^{-1}(x), \varphi^{-1}(y)) \leq K_{p,q}(\varphi;\Omega) d_p(x,y),
$$
holds.
\end{thm}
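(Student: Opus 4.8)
The plan is to derive the metric inequality directly from the ring capacity inequalities of Theorem~\ref{cap-pp} and Theorem~\ref{cap-pq}, exploiting the fact that the capacitary metric is built from capacities of condensers whose compact plate is a connecting curve. The essential observation is that a homeomorphism carries curves to curves while preserving endpoints, so that competitors in the infimum defining $d_p(x,y)$ correspond to competitors in the infimum defining $d_q(\varphi^{-1}(x),\varphi^{-1}(y))$.

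First I would fix $x,y\in\widetilde\Omega$ and let $\tilde\gamma$ be an arbitrary continuous curve joining $x$ and $y$ in $\widetilde\Omega$. Since $\varphi^{-1}:\widetilde\Omega\to\Omega$ is a homeomorphism, $\gamma:=\varphi^{-1}(\tilde\gamma)$ is a continuous curve joining $\varphi^{-1}(x)$ and $\varphi^{-1}(y)$ in $\Omega$, and $\varphi^{-1}(\widetilde\Omega)=\Omega$. I would view the pair $(\tilde\gamma;\widetilde\Omega)$ as a ring condenser (its compact plate $\tilde\gamma$ is a continuum and its open plate $\widetilde\Omega$ is connected), whose $\varphi$-preimage is exactly $(\varphi^{-1}(\tilde\gamma);\varphi^{-1}(\widetilde\Omega))=(\gamma;\Omega)$. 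Applying the ring capacity inequality in the form
$$
\cp_q^{1/q}(\varphi^{-1}(F);\varphi^{-1}(G))\leq K_{p,q}(\varphi;\Omega)\,\cp_p^{1/p}(F;G),
$$
established in the necessity parts of Theorem~\ref{cap-pp} (case $q=p$) and Theorem~\ref{cap-pq} (case $q<p$), to the condenser $(\tilde\gamma;\widetilde\Omega)$ yields
$$
\cp_q^{1/q}(\gamma;\Omega)\leq K_{p,q}(\varphi;\Omega)\,\cp_p^{1/p}(\tilde\gamma;\widetilde\Omega).
$$
Since $\gamma$ joins $\varphi^{-1}(x)$ and $\varphi^{-1}(y)$ in $\Omega$, the left-hand side bounds $d_q(\varphi^{-1}(x),\varphi^{-1}(y))$ from above by the very definition of the infimum; taking the infimum over all curves $\tilde\gamma$ on the right then produces $d_q(\varphi^{-1}(x),\varphi^{-1}(y))\leq K_{p,q}(\varphi;\Omega)\,d_p(x,y)$.

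The single delicate point, and the main (though mild) obstacle, is the legitimacy of using $G=\widetilde\Omega$ as the open plate, since the cited theorems are phrased for condensers $(F;G)\subset\widetilde\Omega$. I expect no genuine difficulty: the capacity inequalities are proved by pulling back admissible functions, and a function admissible for $(\tilde\gamma;\widetilde\Omega)$ has compact support in $\widetilde\Omega$ and composes with $\varphi$ to a function admissible for $(\gamma;\Omega)$, an argument that is insensitive to whether $G$ is proper in $\widetilde\Omega$ or equal to it. Should one wish to remain strictly within the stated hypotheses, I would exhaust $\widetilde\Omega$ by an increasing sequence of relatively compact connected open sets containing $\tilde\gamma$, apply the inequality on each, and pass to the limit using the continuity of capacity under monotone exhaustion.
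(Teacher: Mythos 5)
Your proposal is correct and follows essentially the same route as the paper: pull back an arbitrary connecting curve $\tilde\gamma$ via the homeomorphism, apply the ring capacity inequality from the necessity parts of Theorems~\ref{cap-pp} and~\ref{cap-pq} to the condenser $(\tilde\gamma;\widetilde\Omega)$, bound $d_q(\varphi^{-1}(x),\varphi^{-1}(y))$ by the resulting capacity, and take the infimum over $\tilde\gamma$. Your added remark on the legitimacy of taking $G=\widetilde\Omega$ as the open plate (resolved by the admissible-function pull-back or by exhaustion) is a point the paper passes over silently, so your write-up is, if anything, slightly more careful.
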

\begin{proof}
Because $\varphi: \Omega \to \widetilde\Omega$ is a $(p,q)$-quasiconformal mapping, then for every ring condenser  $F\subset \widetilde{\Omega}$
the inequality
$$
\cp_{q}^{1/q}(\varphi^{-1}(F);\Omega)
\leq  K_{p,q}(\varphi;\Omega)
\cp_{p}^{1/p}(F;\widetilde{\Omega})
$$
holds.
Let $\widetilde\gamma \subset \widetilde\Omega$ be a continuous curve which join $x$ and $y$ in $\widetilde{\Omega}$. Because $\varphi$ is a homeomorphism, then $\varphi^{-1}(\widetilde\gamma)$ is a  continuous curve which join $\varphi^{-1}(x)$ and $\varphi^{-1}(y)$ in ${\Omega}$.
Hence we obtain the following inequalities:
$$
d_q(\varphi^{-1}(x), \varphi^{-1}(y)) \leq \cp^{\frac{1}{q}}_q(\varphi^{-1}(\widetilde\gamma); \Omega) \\
\leq K_{p,q}(\varphi;\Omega) \cp^{\frac{1}{p}}_p(\widetilde\gamma; \widetilde\Omega).
$$
Now since $\widetilde\gamma$ is an arbitrary continuous curve which join $x$ and $y$, then, taking the infimum over all such $\widetilde\gamma$, we obtain 

$$
 d_q(\varphi^{-1}(x), \varphi^{-1}(y)) \leq K_{p,q}(\varphi;\Omega) \inf \cp^{\frac{1}{p}}_p(\widetilde\gamma; \widetilde\Omega)=K_{p,q}(\varphi;\Omega)~ d_p(x,y).
$$

\end{proof}

Using the composition duality theorem (Theorem \ref{CompThD}) for composition operators, we immediately obtain the following result:

\begin{thm}
Let $\varphi: \Omega \to \widetilde\Omega$ be a $(p,q)$-quasiconformal mapping, $n < q \leq p < \infty$. Then, for any two points $x,y \in \Omega$ the following inequality 
$$
d_{p'}(\varphi(x), \varphi(y)) \leq K_{q',p'}(\varphi^{-1};\widetilde{\Omega})~d_{q'}(x, y),\,\, p' = \frac{p}{p-n+1}, 
q' = \frac{q}{q-n+1}
$$
holds.
\end{thm}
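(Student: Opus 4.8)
The plan is to reduce the statement to the preceding Lipschitz-type theorem by invoking the composition duality theorem. Since $\varphi:\Omega\to\widetilde\Omega$ is a weak $(p,q)$-quasiconformal mapping with $n<q\leq p<\infty$, in particular $n-1<q\leq p<\infty$, Theorem~\ref{CompThD} applies and yields that the inverse mapping $\varphi^{-1}:\widetilde\Omega\to\Omega$ generates a bounded composition operator
$$
\left(\varphi^{-1}\right)^{\ast}:L^1_{q'}(\Omega)\to L^1_{p'}(\widetilde\Omega),\qquad p'=\frac{p}{p-n+1},\quad q'=\frac{q}{q-n+1}.
$$
In other words, $\varphi^{-1}$ is itself a weak $(q',p')$-quasiconformal mapping from $\widetilde\Omega$ to $\Omega$, with finite distortion constant $K_{q',p'}(\varphi^{-1};\widetilde\Omega)<\infty$.

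Next I would record that the dual exponents lie in the admissible range for the capacitary metrics. The function $t\mapsto t/(t-n+1)=1+(n-1)/(t-n+1)$ is strictly decreasing on $(n-1,\infty)$, so from $n<q\leq p$ one obtains $p'\leq q'\leq n$. Together with the requirement $p'>n-1$ this places us in the regime $n-1<p'\leq q'\leq n$, which is exactly the hypothesis under which the preceding Lipschitz theorem is stated and under which the metrics $d_{p'}$ and $d_{q'}$ are defined.

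Finally I would apply that preceding theorem to the mapping $\varphi^{-1}$, taking $(q',p')$ in the roles of $(p,q)$ and $\widetilde\Omega$ in the role of $\Omega$. For any two points $x,y$ in the target $\Omega$ of $\varphi^{-1}$ this gives
$$
d_{p'}\!\left((\varphi^{-1})^{-1}(x),(\varphi^{-1})^{-1}(y)\right)\leq K_{q',p'}(\varphi^{-1};\widetilde\Omega)\,d_{q'}(x,y),
$$
and, since $(\varphi^{-1})^{-1}=\varphi$, this is precisely the asserted inequality $d_{p'}(\varphi(x),\varphi(y))\leq K_{q',p'}(\varphi^{-1};\widetilde\Omega)\,d_{q'}(x,y)$.

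The argument is essentially a formal substitution once duality is in place, so I expect the only point requiring genuine attention to be the second step: verifying that the dual exponents $p',q'$ actually fall in the interval $(n-1,n]$, so that both capacitary metrics are well defined and the preceding theorem is applicable. This is automatic for $n=2$, where $p'\in(n-1,n)$ for every $p>2$; I regard checking it in general as the main obstacle, whereas the Lipschitz estimate itself presents no further difficulty since it has already been established.
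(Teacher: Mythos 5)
Your route is exactly the paper's: the paper offers no written proof beyond the sentence that the result follows ``immediately'' from Theorem~\ref{CompThD}, and your three steps (dualize $\varphi$ to a weak $(q',p')$-quasiconformal map $\varphi^{-1}$, check the exponent range, substitute into the preceding Lipschitz theorem) are the intended argument. The formal substitution in your last step is correct, including the identification of the roles of $\Omega$ and $\widetilde\Omega$ and of the constant $K_{q',p'}(\varphi^{-1};\widetilde\Omega)$.

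However, the step you flag as ``the main obstacle'' is a genuine gap, and it cannot be closed for the full stated range of exponents. You correctly get $p'\leq q'<n$ from $n<q\leq p$ and the monotonicity of $t\mapsto t/(t-n+1)$, but the lower bound $p'>n-1$ is an additional constraint, not a consequence: for $n\geq 3$ one computes that $p'>n-1$ is equivalent to $p<\frac{(n-1)^2}{n-2}=n+\frac{1}{n-2}$. Hence for $p\geq n+\frac{1}{n-2}$ one has $p'\leq n-1$, in which case (by the paper's own remark in the proof that $d_p$ is a metric) every continuum has zero $p'$-capacity, $d_{p'}\equiv 0$, and the preceding theorem is simply not applicable, since its hypothesis $n-1<q\leq p\leq n$ is violated. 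The asserted inequality then holds only vacuously (left-hand side zero), which is presumably not what is intended. Your argument is complete only for $n=2$, or for $n\geq 3$ with the additional restriction $n<q\leq p<n+\frac{1}{n-2}$; this appears to be a defect of the theorem statement itself rather than of your reduction, but as written your second step does not go through for all $n<q\leq p<\infty$. A smaller point: applying the metric construction on both sides also tacitly requires $\cp_{q'}(\partial\Omega)>0$ and $\cp_{p'}(\partial\widetilde\Omega)>0$, which neither you nor the paper states.
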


\vskip 0.3cm

Alexander Menovschikov; Department of Mathematics, University of Hradec Kr\'alov\'e, Rokitansk\'eho 62, 500 03 Hradec Kr\'alov\'e, Czech Republic.
 
\emph{E-mail address:} \email{alexander.menovschikov@uhk.cz} \\

Alexander Ukhlov; Department of Mathematics, Ben-Gurion University of the Negev, P.O.Box 653, Beer Sheva, 8410501, Israel 
							
\emph{E-mail address:} \email{ukhlov@math.bgu.ac.il}

\end{document}